\def \1{\mathds{1}}
\def \a{\mathfrak a}
\def \al{\alpha}
\def \Ad{\operatorname{Ad}}
\def \bs{\backslash}
\def \C{{\mathbb C}}
\def \CA{{\cal A}}
\def \CB{{\cal B}}
\def \CE{{\cal E}}
\def \CO{{\cal O}}
\def \df{\ \begin{array}{c} _{\rm def}\\ ^{\displaystyle =}\end{array}\ }
\def \der{\mathrm{der}}
\def \diag{\operatorname{diag}}
\def \e{\emph}
\def \g{\mathfrak g}
\def \Ga{\Gamma}
\def \ga{\gamma}
\def \GL{\operatorname{GL}}
\def \Hom{\operatorname{Hom}}
\def \Im{\operatorname{Im}}
\def \Ind{\operatorname{Ind}}
\def \la{\lambda}
\def \La{\Lambda}
\def \m{\mathfrak m}
\def \N{{\mathbb N}}
\def \n{\mathfrak n}
\def \ol{\overline}
\def \om{\omega}
\def \PGL{\operatorname{PGL}}
\def \ph{\varphi}
\def \Q{{\mathbb Q}}
\def \R{{\mathbb R}}
\def \Re{\operatorname{Re}}
\def \reg{\mathrm{reg}}
\def \sm{\smallsetminus}
\def \SL{\operatorname{SL}}
\def \T{{\mathbb T}}
\def \tr{\operatorname{tr}}
\def \vol{\operatorname{vol}}
\def \Z{{\mathbb Z}}
\def \({\left(}
\def \){\right)}
\newcommand{\norm}
[1]{\left|\hspace{-1pt}\left|#1\right|\hspace{-1pt}\right|}
\renewcommand{\sp}
[1]{\left\langle #1\right\rangle}
\newtheorem{lemma}{Lemma}[section]
\newtheorem{proposition}[lemma]{Proposition}
\newtheorem{corollary}[lemma]{Corollary}
\newtheorem{exmples}[lemma]{Examples}
\newtheorem{exmple}[lemma]{Example}
\newtheorem{defi}[lemma]{Definition}
\newenvironment{definition}[0]{\begin{defi}\rm}
{\end{defi}}
\newtheorem{theorem}[lemma]{Theorem}
\begin{document}

\pagestyle{myheadings} \markright{GEOMETRIC ZETA HIGHER RANK p-ADIC}

\title{Geometric zeta functions for higher rank $p$-adic groups\\ \ \\ \small
Illinois J. Math. Vol. 58, Issue , 719-738 (2014)}
\author{Anton Deitmar \& Ming-Hsuan Kang\thanks{This research was supported by NSC grant 100-2115-M-009-008-MY2 (M-H. Kang).
The research was performed
while the first author was visiting the Shing-Tung Yau Center in National Chiao Tung University and 
the National Center for Theoretical
Sciences, Mathematics Division, in Hsinchu, Taiwan. The authors would like
to thank  both institutions for their support and hospitality.}}

\date{}
\maketitle

{\bf Abstract:}
The higher rank Lefschetz formula for p-adic groups is used to prove rationality of a several-variable zeta function attached to the action of a p-adic group on its Bruhat-Tits building.
By specializing to certain lines one gets one-variable zeta functions, which then can be related to geometrically defined zeta functions. 

{\bf MSC primary:} 11M41\\
{\bf secondary:} 11F70,11F72, 11F75, 20E42, 51E24

\tableofcontents

\section*{Introduction}

The zeta functions of Selberg and Ihara are defined by counting closed geodesics in Riemann surfaces and graphs respectively.
We will refer to these and similar zeta functions defined by geometric data as \e{geometric zeta functions}.
Ihara provided in \cite{Ihara} the only known link between geometric and arithmetic zeta functions by showing that 
the Ihara zeta function for a finite graph equals the Hasse-Weil zeta function of the corresponding Shimura curve. 
The crucial step in the proof is to show that the Ihara zeta function equals the characteristic function of the generating Hecke operator, this latter fact being known under the name \e{Ihara formula}.
For higher dimensional buildings, a generalization of Ihara's formula is still outstanding.
For the case of the group $\PGL_3$, this has been provided in 
\cite{KL} and \cite{KLW}.
One problem in higher rank is the lack of a unified zeta function, as in higher dimensional buildings there are several possibilities to generalize Ihara's approach.
One of them is through the trace formula, or rather its specialization, the Lefschetz formula, which, as in the  case of Lie groups, treated in \cite{GAFA}, yields a several variable zeta function.
This paper is devoted to the study of this zeta function and its relation to other geometric zeta functions.

In the first section we recall the Lefschetz formula as proven in \cite{padlef}.
From this we deduce in Section 2 the analytic continuation of the latter and in Section 3 we give a geometric interpretation of this zeta function.
In Section 4 we consider the special case of the group $\PGL_3$ in which case we relate our several variable zeta function to the geometrically defined zeta functions of Kang, Li and Wang in \cite{KLW}.

\section{The Lefschetz formula}\label{sec1}
Let $F$ be a nonarchimedean local field with valuation
ring $\CO$ and uniformizer $\varpi$. 
Let $|\cdot|$ be the absolute value on $F$ normalized by the rule $\mu(xA)=|x|\mu(A)$, where $\mu$ is any additive Haar measure on $F$.
Denote by $G$ a
semisimple linear algebraic group over $F$. 
In the following we will need some facts about reductive $p$-adic groups which can for instance be found in \cite{Tits} and \cite{Cartier}.

Let $K\subset G$ be a
good maximal compact subgroup. Choose a parabolic
subgroup $P=LN$ of $G$ with Levi component $L$. 
Let $A=A_L$ denote the largest split torus in the center of $L$. 
Then
$A$ is called the {\it split component} of $P$. 
There exists a reductive subgroup $M=M_L$ of $L$, containing the derived group $L^\der$, such that $AM$ has finite index in $L$.
Let
$\Phi=\Phi(G,A)$ be the root system of the pair $(G,A)$,
i.e. $\Phi$ consists of all homomorphisms $\alpha :A\to
\GL_1(F)$, written in the form $a\mapsto a^\al$, such that there is $X$ in the Lie algebra of $G$
with $\Ad(a)X= a^\alpha X$ for every $a\in A$. Given
$\alpha$, let $\n_\alpha$ be the Lie algebra generated by
all such $X$ and let $N_\alpha$ be the closed subgroup of
$N$ corresponding to $\n_\alpha$. Let $\Phi^+=\Phi(P,A)$ be
the subset of $\Phi$ consisting of all positive roots with
respect to $P$. Let $\Delta\subset\Phi^+$ be the subset of
simple roots. 
 Let $A^-\subset A$ be the negative Weyl chamber, i.e., $A^-$ is the set of all
$a\in A$ such that $|a^\alpha| <1$ for any $\alpha\in
\Delta$.

An element $g$ of $G$ is called \emph{elliptic} if it is
contained in a compact torus. 
Let $M_{ell}$ denote the set of elliptic elements of $M$.

Let $X^*(A)=\Hom(A,\GL_1)$ be the group of all
homomorphisms as algebraic groups from the torus $A$ to $\GL_1$. This
group is isomorphic to $\Z^r$ with $r=\dim A$. Likewise let
$X_*(A)=\Hom(\GL_1,A)$. There is a natural $\Z$-valued
pairing
\begin{align*}
X^*(A)\times X_*(A) &\to \Hom(\GL_1,\GL_1)\cong\Z\\
(\alpha,\eta) &\mapsto \alpha\circ\eta.
\end{align*}
For every root $\alpha\in\Phi(A,G)\subset X^*(A)$ let
$\breve{\alpha}\in X_*(A)$ be its coroot, i.e., $\breve\al$ is the unique element of $X_*(A)$ such that $A=(\ker(\al))(\Im(\breve\al))$ and $(\alpha,\breve{\alpha})=2$. The valuation $v$ of $F$
gives a group homomorphism $\GL_1(F)\to\Z$. Let $A_c$ be
the unique maximal compact subgroup of $A$. Let $\bar A
=A/A_c$; then $\bar A$ is a $\Z$-lattice of rank $r=r(P)=\dim A$. By composing with the valuation $v$, the group $X^*(A)$
can be identified with
$$
\bar A^*=\Hom(\bar A,\Z).
$$
Let
$$
\a_0^*=\Hom(\bar A,\R)\ \cong\ X^*(A)\otimes\R
$$
be the real vector space of all group homomorphisms from
$\bar A$ to $\R$ and let
$\a^*=\a_0^*\otimes\C=\Hom(\bar A,\C)\cong
X^*(A)\otimes\C$. For $a\in A$ and $\la\in\a^*$ let
$$
a^\la=q^{-\la(a)},
$$
where $q$ is the number of elements in the residue class field of
$F$. In this way we obtain an identification
$$
{\a^*}/\mbox{\small$\frac{2\pi i}{\log q}$}\bar A^* \
\cong\ \Hom(\bar A,\C^\times).
$$
A \e{quasicharacter} of $A$ is a continuous group homomorphism $\nu : A\to\C^\times$. It is called a \e{character} if its image lies in the cricle group $\T=\{z\in\C:|z|=1\}$.
A quasicharacter $\nu$ is called {\it
unramified} if $\nu$ is trivial on $A_c$. The set
$\Hom(\bar A,\C^\times)$ can be identified with the set of
unramified quasicharacters on $A$. Any unramified
quasicharacter $\nu$ can thus be given a unique real part
$\Re(\nu)\in \a_0^*$. This definition extends to general quasicharacters $\chi:A\to\C^\times$
as follows. Choose a splitting $s:\bar A\to A$ of the exact
sequence
$$
1\to A_c\to A\to\bar A\to 1.
$$
Then $\nu=\chi\circ s$ is an unramified character of $A$. Set
$$
\Re(\chi)=\Re(\nu).
$$
This definition does not depend on the choice of the splitting
$s$. For quasicharacters $\chi$, $\chi'$ and $a\in A$ we will
frequently write $a^\chi$ instead of $\chi(a)$ and
$a^{\chi+\chi'}$ instead of $\chi(a)\chi'(a)$. Note that the
absolute value satisfies $|a^\chi|=a^{\Re(\chi)}$ and that a
quasicharacter $\chi$ actually is a character if and only if
$\Re(\chi)=0$.

Let $\Delta_P : P\to\R_+$ be the modular function of the
group $P$. Then the element $\rho=\rho_P=\frac12\sum_{\al\in\Phi^+}\al$, called the \e{modular shift} of $P$, satisfies $\Delta_P(a)=|a^{2\rho_P}|$. For $\nu\in\a^*$ and a root
$\alpha$ let
$$
\nu_\alpha= (\nu,\breve{\alpha})\ \in\
X^*(\GL_1)\otimes\C\ \cong\ \C.
$$
Note that $\nu\in\a_0^*$ implies $\nu_\alpha\in\R$ for
every $\alpha$. For $\nu\in\a_0^*$ we say that $\nu$ is
positive, $\nu>0$, if $\nu_\alpha>0$ for every positive
root $\alpha$.

{\bf Example.} Let $G=\GL_n(F)$ and let $\varpi_j\in G$ be
the diagonal matrix
$\varpi_j=\diag(1,\dots,1,\varpi,1,\dots,1)$ with the
$\varpi$ on the $j$-th position, where $\varpi$ is a uniformizer of the valuation ring $\CO$. Let $\nu\in\a^*$ and let
$$
\nu_j=\nu(\varpi_j A_c)\ \in\ \C.
$$
Let $\alpha$ be a root, say
$\alpha(\diag(a_1,\dots,a_n))=\frac{a_i}{a_j}$. Then
$$
\nu_\alpha= \nu_i-\nu_j.
$$
Hence $\nu\in\a_0^*$ is positive if and only if
$\nu_1>\nu_2>\dots >\nu_n$.

We will fix Haar-measures of $G$ and its reductive subgroups as follows.
For $H\subset G$ being a torus there is a unique maximal compact subgroup $U_H$ which is open. 
Then we fix a Haar measure on $H$ such that $\vol (U_H)=1$.
If $H$ is connected semisimple with compact center then we choose the unique positive Haar-measure which up to sign coincides with the Euler-Poincar\'e measure
\cite{Kottwitz}.
This measure is uniquely determined by the following property.
Any discrete torsion-free cocompact subgroup $\Ga_H\subset H$ has finite dimensional rational cohomology,  i.e., the $\Q$-vector space $H^*(\Ga_H,\Q)=\bigoplus_{p=0}^\infty H^p(\Ga_H,\Q)$ is finite dimensional, where $H^p(\Ga_H,\Q)$ deniotes the group cohomology with coefficients in the field $\Q$.
We denote its Euler characteristic by  $\chi(\Ga_H,\Q)$, i.e.,
$$
\chi(\Ga_H,\Q)=\dim H^{\text{even}}(\Ga_H,\Q)-\dim H^{\text{odd}}(\Ga_H,\Q).
$$
The Euler-Poincar\'e measure on $H$ is the unique Haar measure with the property, that for every 
discrete torsion-free cocompact subgroup $\Ga_H\subset H$ we have
$$
\vol (\Ga_H \bs H) = (-1)^{r(H)}\chi(\Ga_H ,\Q),
$$
where $r(H)$ is the $k$-rank of $H$. For the applications
recall that centralizers of tori in connected groups are connected
\cite{borel-lingroups}.

Assume we are given a discrete subgroup $\Ga$ of $G$ such that the quotient space
$\Ga \bs G$ is compact. Let $(\omega ,V_\omega)$ be a finite dimensional unitary
representation of $\Ga$ and let $L^2(\Ga \bs G,\omega)$ be the Hilbert space
consisting of all measurable functions $f: G \to V_\omega$ such that $f(\ga x) =
\omega(\ga) f(x)$ and $\norm f$ is square integrable over $\Ga \bs G$ (modulo null
functions). Let $R$ denote the unitary representation of $G$ on $L^2(\Ga \bs
G,\omega)$ defined by right shifts, i.e. $R(g) \ph (x) = \ph (xg)$ for $\ph \in
L^2(\Ga \bs G,\omega)$. It is known, that as a $G$-representation this space splits
as a topological direct sum:
$$
L^2(\Ga \bs G,\omega) = \bigoplus_{\pi \in \hat{G}} N_{\Ga ,\omega}(\pi) \pi
$$
with finite multiplicities $N_{\Ga ,\omega}(\pi)<\infty$.

Suppose $\ga\in\Ga$ is $G$-conjugate to some $a_\ga b_\ga\in A^-M_{ell}$.
Let $G_\ga$ and $\Ga_\ga$ denote the centralizers of $\ga$ in $G$ and $\ga$ respectively.
We want to compute the covolume
$$
\vol(\Ga_\ga \bs G_\ga) .
$$
Let $\bar F$ denote an algebraic closure of the ground field $F$. 
An element $g$ of $\GL_n(F)$ is called \e{neat}, if the subgroup of $\bar F^\times$ generated by the eigenvalues of $g$, is torsion-free. 
An element $x$ of $G$ is called 
 \emph{neat} if for  some injective
representation $\rho : G\to GL_n(F)$ of $G$ the matrix $\rho(x)$ is neat.
It is easy to check that in this case the same property holds for every representation $\rho$, injective or not.
A subset $A$ of $G$ is called neat if each element of it
is neat. 
If the characteristic of $F$ is zero, then every arithmetic group
$\Ga$ has a finite index subgroup which is neat \cite{borel}. 

We suppose that $\Ga$ is
neat. Since $\Ga$ is cocompact, this implies that for every
$\ga\in\Ga$ the Zariski closure of the group generated by $\ga$ is a torus. It then
follows  that the centralizer $G_\ga$ is a connected reductive group \cite{borel-lingroups}.

An element $\ga\in\Ga$ is called \emph{primitive} if $\ga =\sigma^n$ with
$\sigma\in\Ga$ and $n\in\N$ implies $n=1$. It is a property of discrete cocompact
torsion-free subgroups $\Ga$ of $G$ that every $\ga\in\Ga$, $\ga\ne 1$ is a positive
power of a unique primitive element. In other words, given a nontrivial $\ga\in\Ga$
there exists a unique primitive $\ga_0$ and a unique $\mu(\ga)\in\N$ such that
$$
\ga =\ga_0^{\mu(\ga)}.
$$

Let $\Sigma$ be a group with finite dimensional rational cohomology.
For $r\in\N$ we define the \emph{higher Euler characteristic} as
$$
\chi_{_r}(\Sigma) = \chi_{_r}(\Sigma ,\Q) \ :=\ \sum_{p=0}^{\infty} 
(-1)^{p+r}\binom pr
\dim H^p(\Sigma ,\Q),
$$
where the sum is actually finite.
As $\Ga$ acts freely on the Bruhat-Tits building $\CB$ of $G$, which is contractible, the quotient $\Ga\bs \CB$ is a classifying space for $\Ga$, hence the rational cohomology of $\Ga$ coincides with the cohomology of the finite CW-complex $\Ga\bs\CB$, hence is finite-dimensional.

 We denote by $\CE_P(\Ga)$
the set of all conjugacy classes $[\ga]$ in $\Ga$ such that $\ga$ is $G$-conjugate to an element $a_\ga m_\ga\in AM$, where $m_\ga$ is elliptic and
$a_\ga\in A^-$.

Let $\ga\in\CE_P(\Ga)$. To simplify the notation let us assume that $\ga=a_\ga
m_\ga\in A^- M_{ell}$. Let $C_\ga$ be the connected component of the center of
$G_\ga$ then
$C_\ga = AB_\ga$, where
$B_\ga$ is the connected center of $M_{m_\ga}$ the latter group will also be
written as $M_\ga$. Let $M_\ga^{der}$ be the derived group of $M_\ga$. Then
$M_\ga=M_\ga^{der} B_\ga$.

Let $\Ga_{\ga,A}=A\cap \Ga_\ga B_\ga$ and $\Ga_{\ga,M}=M_\ga^{der}\cap \Ga_\ga
AB_\ga$. Similar to the proof of Lemma 3.3 of \cite{Wolf}, one shows that
$\Ga_{\ga,A}$ and
$\Ga_{\ga,M}$ are discrete cocompact subgroups of $A$ and $M_\ga^{der}$ respectively.
Let
$$
\la_{\ga}\df \vol(\Ga_{\ga,A}\bs A).
$$

We are now able to express the covolume of the centralizers $\vol(\Ga_\ga\bs G_\ga)$ in terms of higher Euler characteristics.

\begin{proposition}\label{2.3}
\begin{enumerate}[\rm (a)]
\item Assume that $\Ga$ is neat and let $\ga\in\Ga$ be $G$-conjugate to an element of $A^-M_{ell}$. 
Then we obtain
$$
\vol (\Ga_\ga\bs G_\ga) = \la_\ga\ |\chi_{_r}(\Ga_\ga)|,
$$
where $r=\dim A$.
\item
Let $\Ga,\Lambda$ be groups with finite dimensional rational cohomology.
Let $C_r$ be a group isomorphic to $\Z^r$ and assume there is an exact sequence
$$
1\to C_r\to \Ga\to \Lambda\to 1.
$$
Assume that $C_r$ is central in $\Ga$. 
Then 
$$
\chi (\Lambda,\Q) = \chi_{r}(\Ga,\Q).
$$
\end{enumerate}
\end{proposition}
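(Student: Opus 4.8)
The plan is to prove part~(b) first and then deduce part~(a) from it, using the chosen normalisation of the Haar measures. For part~(b) I would run the Lyndon--Hochschild--Serre spectral sequence of the extension $1\to C_r\to\Ga\to\Lambda\to1$ with coefficients in $\Q$,
$$
E_2^{p,q}=H^p\bigl(\Lambda,H^q(C_r,\Q)\bigr)\ \Longrightarrow\ H^{p+q}(\Ga,\Q).
$$
Since $C_r\cong\Z^r$ is central, inner automorphisms act trivially on $H^q(C_r,\Q)={\textstyle\bigwedge}^q(\Q^r)$, so the coefficient system is trivial and $E_2^{p,q}=H^p(\Lambda,\Q)\otimes{\textstyle\bigwedge}^q(\Q^r)$; as a bigraded algebra $E_2$ is $H^\bullet(\Lambda,\Q)\otimes{\textstyle\bigwedge}^\bullet(\Q^r)$, the exterior factor living in the column $p=0$ and generated in bidegree $(0,1)$. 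Granting that this spectral sequence degenerates at $E_2$ (discussed below), working over the field $\Q$ one gets $\dim H^n(\Ga,\Q)=\sum_{p+q=n}\binom rq\dim H^p(\Lambda,\Q)$; equivalently, putting $f_\Sigma(t):=\sum_{p\ge0}(-1)^p\dim H^p(\Sigma,\Q)\,t^p$ for a group $\Sigma$, this reads $f_\Ga(t)=f_\Lambda(t)(1-t)^r$.

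Next a binomial rearrangement gives $f_\Sigma(1+x)=\sum_p(-1)^p\dim H^p(\Sigma)\sum_k\binom pk x^k=\sum_k(-x)^k\chi_k(\Sigma)$, so that $\chi_k(\Sigma)$ is $(-1)^k$ times the coefficient of $x^k$ in $f_\Sigma(1+x)$. Substituting $t=1+x$ in $f_\Ga(t)=f_\Lambda(t)(1-t)^r$ yields $f_\Ga(1+x)=(-x)^r f_\Lambda(1+x)$, and comparing the coefficients of $x^r$ on both sides gives $\chi_r(\Ga)=\chi_0(\Lambda)=\chi(\Lambda)$, which is the assertion (one also reads off $\chi_j(\Ga)=0$ for $j<r$ and $\chi_{r+s}(\Ga)=\chi_s(\Lambda)$ for all $s$).

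The step that needs care — and the one I expect to be the genuine obstacle — is the degeneration at $E_2$. Because the spectral sequence is multiplicative and $E_2$ is generated, as an algebra, by the bottom row $E_2^{\bullet,0}=H^\bullet(\Lambda,\Q)$, on which every $d_k$ vanishes for bidegree reasons, together with $E_2^{0,1}$, it is enough that $d_2=0$ on $E_2^{0,1}=H^1(C_r,\Q)=\Hom(C_r,\Q)$. There $d_2$ is the transgression, i.e.\ cup product with the image in $H^2(\Lambda,\Q^r)$ of the extension class of $1\to C_r\to\Ga\to\Lambda\to1$; so the spectral sequence collapses precisely when this extension class is rationally trivial, and this is what one has to verify in the situation where the proposition is used — centrality by itself only makes the coefficient system trivial, not the extension.

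For part~(a), since $a_\ga$ lies in $A^-$, hence in the open negative chamber, one has $Z_G(a_\ga)=L$, so $G_\ga=Z_L(m_\ga)=A\cdot M_\ga$ is an almost direct product with $A$ central; moreover $M_\ga=M_\ga^{\der}\cdot B_\ga$, and $B_\ga$ is anisotropic because $m_\ga$ is elliptic, so $B_\ga(F)$ is compact and $C_\ga=AB_\ga$ is the connected centre of $G_\ga$. With our normalisations the Haar measure on $G_\ga$ is the image of $\mu_A\otimes\mu_{M_\ga^{\der}}\otimes\mu_{B_\ga}$ under $A\times M_\ga^{\der}\times B_\ga\to G_\ga$, where $\mu_A$ and $\mu_{B_\ga}$ give the maximal compacts volume $1$ (in particular $\vol(B_\ga)=1$) and $\mu_{M_\ga^{\der}}$ is the Euler--Poincar\'e measure. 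Following the reducibility argument of Lemma~3.3 of \cite{Wolf} along $G_\ga=A\cdot M_\ga^{\der}\cdot B_\ga$, the lattice $\Ga_\ga$ is reducible: $\Ga_{\ga,A}$ and $\Ga_{\ga,M}$ are cocompact in $A$ resp.\ $M_\ga^{\der}$ and
$$
\vol(\Ga_\ga\bs G_\ga)=\vol(\Ga_{\ga,A}\bs A)\cdot\vol(\Ga_{\ga,M}\bs M_\ga^{\der})=\la_\ga\,(-1)^{r(M_\ga^{\der})}\chi(\Ga_{\ga,M}),
$$
the last equality being the defining property of the Euler--Poincar\'e measure. Finally the subgroup $C:=\Ga_\ga\cap C_\ga$ is central in $\Ga_\ga$, is isomorphic to $\Z^r$ (it is torsion-free by neatness and discrete in $C_\ga$, whose compact factor is $B_\ga$), and $\Ga_\ga/C$ is canonically isomorphic to $\Ga_{\ga,M}$; so part~(b) gives $\chi(\Ga_{\ga,M})=\chi(\Ga_\ga/C)=\chi_r(\Ga_\ga)$, and since the volume is positive we conclude $\vol(\Ga_\ga\bs G_\ga)=\la_\ga|\chi_r(\Ga_\ga)|$. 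The real work in (a) is the reducibility of $\Ga_\ga$ — one must carefully track the compact torus $B_\ga$ and the finite kernels of isogenies such as $A\times M_\ga\to AM_\ga$ — together with the identification $\Ga_\ga/C\cong\Ga_{\ga,M}$.
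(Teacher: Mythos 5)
You are not really competing with a proof in the paper here: the paper's ``proof'' of Proposition \ref{2.3} is just the citation \cite{padlef}, so the only question is whether your argument stands on its own, and for part (b) it does not. As you yourself flag, the Hochschild--Serre computation only gives $f_\Ga(t)=f_\Lambda(t)(1-t)^r$ if the spectral sequence degenerates at $E_2$, and degeneration is equivalent to the vanishing of the image of the extension class in $H^2(\Lambda,\Q^r)$; you leave this unverified. This is not a removable technicality, because the statement with only the stated hypotheses is false: for the integer Heisenberg group, a central extension $1\to\Z\to H\to\Z^2\to 1$ with both $H$ and $\Z^2$ of finite cohomological dimension over $\Q$, the Betti numbers of $H$ are $1,2,2,1$, so $\chi_1(H)=2-4+3=1$ while $\chi(\Z^2)=0$. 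Hence any complete proof of (b) must either add a hypothesis (the extension class is torsion in $H^2(\Lambda,C_r)$, e.g.\ a finite-index splitting) or verify that hypothesis for the particular extensions to which the proposition is applied; your binomial identity $f_\Sigma(1+x)=\sum_k(-x)^k\chi_k(\Sigma)$ and the deduction of $\chi_r(\Ga)=\chi(\Lambda)$ from degeneration are correct, but the proof is genuinely incomplete at exactly the point you name.

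For part (a) the strategy is reasonable (and it is legitimate to feed (b) into (a)), but the substantive steps are again deferred rather than carried out: the factorization $\vol(\Ga_\ga\bs G_\ga)=\vol(\Ga_{\ga,A}\bs A)\,\vol(\Ga_{\ga,M}\bs M_\ga^{\der})$ across the almost direct product $A\,M_\ga^{\der}B_\ga$ (finite intersections and the compact factor $B_\ga$ must be shown not to change the volume), the cocompactness of $\Ga_{\ga,A}$ and $\Ga_{\ga,M}$ via the Wolf-type argument, the identification $\Ga_\ga/(\Ga_\ga\cap C_\ga)\cong\Ga_{\ga,M}$, the fact that $\Ga_\ga\cap C_\ga$ has rank exactly $r=\dim A$ (the split central rank of $G_\ga$ can a priori exceed $r$), and finally the verification of the rational-splitting condition needed to invoke (b) for this extension. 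As it stands your proposal is a correct outline with the key lemma missing; since the paper supplies no proof beyond the reference, these verifications would have to be written out (or quoted precisely from \cite{padlef}) to close the argument.
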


\begin{proof}
\cite{padlef}
\end{proof}

For a representation $(\pi,V_\pi)$ of $G$ let $(\pi^\infty,V_\pi^\infty)$ denote the subrepresentation of
\emph{smooth vectors}, i.e., $\pi^\infty$ is the representation on the space
$\bigcup_{H\subset G} V_\pi^H$, where $H$ ranges over the set of all open subgroups
of $G$ and $V_\pi^H$ is the subspace of $H$-stable vectors. Further let $\pi_N$ denote the \emph{Jacquet module} of $\pi$. By
definition $\pi_N$ is the largest quotient $MAN$-module of $\pi^\infty$ on which
$N$ acts trivially. One can achieve this by factoring out the vector subspace
consisting of all vectors of the form $v-\pi(n)v$ for $v\in\pi^\infty$, $n\in N$. It
is known that if $\pi$ is an irreducible admissible representation, then $\pi_N$ is a
admissible $MA$-module of finite length. For a smooth $M$-module $V$ let
$H_c^\bullet(M,V)$ denote the continuous cohomology with coefficients in $V$ as in
\cite{Borel-Wallach}.

Let $\sigma$ be an element of a group $S$ acting on a finite dimensional  $F$-vector space $V$.
Then we write $\la_{\min}(\sigma\mid V)$ for the minimal norm of an eigenvalue of $\sigma$ in the algebraic closure $\bar F$ of $F$.
Likewise, $\la_{\max}(\sigma\mid V)$ is the maximal norm of such an eigenvalue.
The Lie algebra $\g$ of $G$ has a direct sum decomposition $\g=\bar \n+\m+\n$, where $\m$ is the Lie algebra of $M$ and $\n$ is the Lie algebra of $N$ as well as $\bar\n$ is the Lie algebra of the opposite of $N$.
Then let $\tilde M$ denote the set of all $m\in M$ such that
$$
\la_{\min}(m\mid \bar \n)>\la_{\max}(m\mid \m+\n).
$$ 

\begin{theorem}(Lefschetz Formula)\label{Lefschetz}\\
Let $\Ga$ be a neat discrete cocompact subgroup of $G$.
Let $\ph$ be a uniformly smooth function on $A$ with support in $A^-$. Suppose that
the function $a\mapsto \ph(a)|a^{-2\rho}|$ is integrable on $A$. Let
$\sigma$ be a finite dimensional unitary representation of $M$. Let $q$ be the $F$-split rank
of $G$ and $r=\dim A$. Then
\begin{align*}
\sum_{\pi\in\hat G} N_{\Ga,\omega}(\pi)\sum_{q=0}^{\dim M} (-1)^q \int_{A^-}
\ph(a)\,\tr(a\mid H_c^q(M,\pi_N\otimes\sigma))\, da\\
=
\sum_{[\ga]\in\CE_P(\Ga)} \la_\ga\,
|\chi_r(\Ga_\ga)|\,\tr\omega(\ga)\,\tr\sigma(m_\ga)\,a_\ga^{2\rho}\,\ph(a_\ga).
\end{align*}

Both outer sums converge absolutely and the sum over $\pi\in\hat G$ actually is a
finite sum, i.e., the summand is zero for all but finitely many $\pi$. For a given
compact open subgroup $U$ of $A$ both sides represent a continuous linear functional
on the space of all functions $\ph$ as above which factor over $A/U$, where this
space is equipped with the norm $\norm \ph=\int_A|\ph(a)| a^{2\rho}\, da$.
\end{theorem}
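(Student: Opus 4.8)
The plan is to apply the Selberg trace formula for the cocompact group $\Ga$ to a test function $f$ on $G$ tailored to the pair $(\ph,\sigma)$, and to compute $\tr R(f)$ in two ways. One first reduces to $\ph\in C_c^\infty(A)$ factoring over $A/U$: granting the two continuity estimates claimed at the end of the statement, the density of compactly supported functions in the $\norm{\ph}$-norm together with dominated convergence extends the identity to every $\ph$ satisfying only the integrability hypothesis. For compactly supported $\ph$, fix an Euler--Poincar\'e function $f_{M,\sigma}$ on $M$: built from the Euler--Poincar\'e function of the semisimple part of $M$ in the sense of \cite{Kottwitz}, twisted by the class function $\tr\sigma$, it has the two defining properties (i) $\tr W(f_{M,\sigma})=\sum_q(-1)^q\dim H_c^q(M,W\otimes\sigma)$ for every admissible finite length $M$-module $W$, and (ii) its $M$-orbital integral at $m\in M$ equals $\tr\sigma(m)$ for $m\in M_{ell}$ and vanishes otherwise, all measures normalised by the Euler--Poincar\'e measure. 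Letting $\eta_N$ be the normalised characteristic function of a small compact open subgroup of $N$, one transports $\ph\otimes f_{M,\sigma}\otimes\eta_N$ from $P=MAN$ to $G$ by averaging over $K$ with a modular ($\Delta_P^{1/2}$-type) twist built in; since $\operatorname{supp}\ph\subset A^-$, and using the set $\tilde M$ to organise the support so that the relevant elements act expandingly on $\ol{\n}$, this produces a function $f$ in the class of uniformly smooth functions on $G$ for which the trace formula holds, bi-invariant under some compact open $U'\subset G$.

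\textbf{Geometric side.}
Inserting $f$ into
$$
\tr R(f)=\sum_{[\ga]}\vol(\Ga_\ga\bs G_\ga)\,\tr\omega(\ga)\,O_\ga(f),
$$
the sum running over $\Ga$-conjugacy classes, one checks first that $O_\ga(f)=0$ unless $\ga$ is $G$-conjugate into $A^-M_{ell}$, i.e.\ unless $[\ga]\in\CE_P(\Ga)$: the support of $f$ lies near $P$ and contracts onto $A^-M_{ell}$, so for any other $\ga$ its conjugacy class misses that support. For $[\ga]\in\CE_P(\Ga)$, say $\ga=a_\ga m_\ga$, the centraliser is $G_\ga=M_\ga A\subseteq P$, and a parabolic descent of the orbital integral applies --- the integral over $N$ converging because $\ga$ acts contractingly on $\n$ --- which, $f$ having been constructed with the appropriate modular twist, yields $O_\ga(f)=a_\ga^{2\rho}\,\ph(a_\ga)\,\tr\sigma(m_\ga)$, where $a_\ga^{2\rho}=\Delta_P(a_\ga)$ and the last factor comes from property (ii). Since Proposition \ref{2.3}(a) gives $\vol(\Ga_\ga\bs G_\ga)=\la_\ga|\chi_r(\Ga_\ga)|$ with $r=\dim A$, the geometric side becomes
$$
\sum_{[\ga]\in\CE_P(\Ga)}\la_\ga\,|\chi_r(\Ga_\ga)|\,\tr\omega(\ga)\,\tr\sigma(m_\ga)\,a_\ga^{2\rho}\,\ph(a_\ga).
$$

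\textbf{Spectral side.}
From the decomposition of $L^2(\Ga\bs G,\omega)$ one has $\tr R(f)=\sum_\pi N_{\Ga,\omega}(\pi)\,\tr\pi(f)$; bi-invariance of $f$ under $U'$ forces $\tr\pi(f)=0$ unless $\pi^{U'}\neq0$, and $\sum_{\pi^{U'}\neq0}N_{\Ga,\omega}(\pi)=\dim L^2(\Ga\bs G,\omega)^{U'}<\infty$, which is the asserted finiteness. To evaluate $\tr\pi(f)$ one invokes the Casselman-type identity, valid for functions parabolically descended along $P$ whose support contracts strictly in the $A$-direction,
$$
\tr\pi(f)=\int_{A^-}\ph(a)\,\tr\big(\pi_N(a)\,\pi_N(f_{M,\sigma})\big)\,da,
$$
where $\pi_N$ is the admissible finite length Jacquet module, carrying commuting actions of $M$ and $A$. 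Property (i) of $f_{M,\sigma}$, applied to $W=\pi_N$ with the $A$-action regarded as a family of $M$-automorphisms, turns the integrand into $\sum_q(-1)^q\tr(a|H_c^q(M,\pi_N\otimes\sigma))$; multiplying by $N_{\Ga,\omega}(\pi)$ and summing over $\pi$ recovers the left-hand side of the theorem.

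\textbf{Convergence, and the hard point.}
It remains to establish the two continuity bounds, which also yield the absolute convergence of both outer sums. On the geometric side, since $a^{2\rho}$ varies boundedly over each coset $a_\ga U$, one dominates $a_\ga^{2\rho}|\ph(a_\ga)|$ by a multiple of $\int_{a_\ga U}|\ph(a)|a^{2\rho}\,da$, and the resulting sum is controlled by $\norm{\ph}$ once one knows that only finitely many $[\ga]\in\CE_P(\Ga)$ have $a_\ga$ in a fixed compact subset of $A$ and that the weights $\la_\ga|\chi_r(\Ga_\ga)|$ stay bounded there --- a discreteness and displacement argument for conjugacy classes of the cocompact torsion free $\Ga$ acting on the building $\CB$. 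On the spectral side only the finitely many $\pi$ with $\pi^{U'}\neq0$ contribute, and for these one needs $\big|\tr(a|H_c^q(M,\pi_N\otimes\sigma))\big|\ll a^{2\rho}$ for $a\in A^-$; this follows once one shows that the $A$-exponents carried by these cohomology groups are bounded below by $2\rho$, which rests on Casselman's description of the Jacquet module of a unitary representation together with the vanishing of $H_c^\bullet(M,\cdot)$ off the cohomological $M$-modules (cf.\ \cite{Borel-Wallach}, \cite{padlef}). The two steps I expect to be the genuine obstacles are (i) producing a single function $f$ that simultaneously descends parabolically onto the Jacquet-module side and has orbital integrals localising precisely on $\CE_P(\Ga)$ with the clean value above --- reconciling ``supported near $P$'' with membership in a class of functions for which the trace formula applies --- and (ii) the exponent estimate underlying the spectral-side continuity.
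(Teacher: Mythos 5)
The paper itself contains no proof of this theorem: it is imported wholesale from the cited reference \cite{padlef}, where the argument is exactly the one you outline --- Selberg trace formula for cocompact $\Ga$, a test function assembled from $\ph$, a Kottwitz-style Euler--Poincar\'e function on $M$ twisted by $\tr\sigma$, and a small compact open piece on $N$, with the spectral side evaluated through the Jacquet module via a Casselman-type character/descent identity and the geometric side through parabolic descent of orbital integrals plus Proposition \ref{2.3}(a). So your strategy is the right one and matches the source.

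But as written your proposal is a skeleton rather than a proof, and the pieces you defer are precisely where the content lies. First, the localization of the geometric side is not a support argument: an Euler--Poincar\'e function on $M$ is \emph{not} supported in the elliptic set, and the function $f$ obtained by $K$-averaging the $P$-descended data is not ``supported near $A^-M_{ell}$''; what one actually uses is that the $M$-orbital integrals of $f_{M,\sigma}$ vanish off $M_{ell}$, combined with a genuine parabolic descent $O_\ga(f)=\Delta_P(a_\ga)^{?}\int_{M_\ga\bs M}\cdots$ in which the $N$-integral converges because $a_\ga\in A^-$ acts contractingly (this is where the set $\tilde M$ and the hypothesis $\ga\sim a_\ga m_\ga\in A^-M_{ell}$ really enter), and in which the modular factor must be tracked to produce exactly $a_\ga^{2\rho}\,\ph(a_\ga)\,\tr\sigma(m_\ga)$; one must also rule out contributions from classes conjugate into other parabolics or with non-elliptic $M$-part, which again is an orbital-integral vanishing statement, not a support statement. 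Second, the identity $\tr\pi(f)=\int_{A^-}\ph(a)\tr(\pi_N(a)\pi_N(f_{M,\sigma}))\,da$ is only valid after the correct normalization of the Jacquet functor and for $\ph$ supported where Casselman's asymptotic character expansion applies; you assert it but do not verify that your $f$ lies in the class for which both this identity and the trace formula hold (your own ``obstacle (i)''). Third, the continuity/convergence statements --- finiteness of the $a_\ga$ in a compact set with bounded weights $\la_\ga|\chi_r(\Ga_\ga)|$, and the bound on the $A$-exponents of $H_c^\bullet(M,\pi_N\otimes\sigma)$ for unitary $\pi$ (Casselman's criterion for exponents of Jacquet modules of unitarizable representations) --- are flagged but not proved, and without the exponent bound the interchange of the sum over $\pi$, the sum over $\la$, and the integral over $A^-$ is unjustified. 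Since these are exactly the steps carried out in \cite{padlef}, your proposal reproduces the architecture of the actual proof but leaves its load-bearing parts unproven.
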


Let $A^*$ denote the set of all continuous group homomorphisms $\la\colon
A\to\C^\times$, which we write in the form $a\mapsto a^\la$. For $\la\in A^*$ and an $A$-module $V$ let $V_\la$ denote the
generalized $\la$-eigenspace, i.e.,
$$
V_\la\df \bigcup_{k=1}^\infty \{ v\in V\mid (a-a^\la)^k v=0\ \forall a\in A\}.
$$
Then
\begin{align*}
&\int_{A^-} \ph(a)\,\tr(a\mid H_c^q(M,\pi_N\otimes\sigma))\, da\\
&=
\sum_{\la\in A^*}\dim
H_c^q(M,\pi_N\otimes\sigma)_\la\,\int_{A^-}\ph(a)\,a^\la\, da.
\end{align*}

For $\la\in A^*$ define
$$
m_\la^{\sigma,\omega} \df \sum_{\pi\in\hat G}N_{\Ga,\omega}(\pi)\sum_{q=0}^{\dim
M}(-1)^q\,\dim H_c^q(M,\pi_N\otimes\sigma)_\la.
$$
The sum is always finite.
Theorem \ref{Lefschetz} is equivalent to the following Corollary.

\begin{corollary}
(Lefschetz Formula)\\
As an identity of distributions on $A^-$ we have
$$
\sum_{\la\in A^*} m_\la^{\sigma,\omega}\, \la= \sum_{[\ga]\in\CE_P(\Ga)}
\la_\ga\,|\chi_r(\Ga_\ga)|\,a_\ga^{2\rho}\,\tr\omega(\ga)\,\tr\sigma(m_\ga)\,\delta_{a_\ga}.
$$
\end{corollary}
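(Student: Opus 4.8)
The plan is to deduce the Corollary from the Lefschetz Formula of the Theorem simply by reading the latter as an identity of distributions, tested against the functions $\ph$ that occur in it. Fix a locally constant, compactly supported function $\ph$ on the open subset $A^-\subseteq A$. Such a $\ph$ is in particular uniformly smooth, supported in $A^-$, makes $a\mapsto\ph(a)|a^{2\rho}|$ integrable on $A$ (it has compact support), and factors through $A/U$ for a suitable compact open $U\subset A$; hence the Theorem applies to it. The whole argument then reduces to checking that pairing each side of the Corollary with $\ph$ returns, respectively, the spectral and the geometric side of the Theorem.

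The geometric side is immediate. By definition $\delta_{a_\ga}$ is the evaluation functional $\ph\mapsto\ph(a_\ga)$, so
$$\Big\langle\sum_{[\ga]\in\CE_P(\Ga)}\la_\ga\,|\chi_r(\Ga_\ga)|\,a_\ga^{2\rho}\,\tr\omega(\ga)\,\tr\sigma(m_\ga)\,\delta_{a_\ga}\,,\ \ph\Big\rangle=\sum_{[\ga]\in\CE_P(\Ga)}\la_\ga\,|\chi_r(\Ga_\ga)|\,a_\ga^{2\rho}\,\tr\omega(\ga)\,\tr\sigma(m_\ga)\,\ph(a_\ga),$$
which is exactly the right-hand side of the Theorem; the series converges absolutely by the Theorem, and since $\operatorname{supp}\ph$ is compact it meets only finitely many of the $a_\ga$, so this sum of Dirac distributions is a well-defined distribution on $A^-$.

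For the spectral side I would invoke the identity already recorded just above the Corollary. Since $\pi_N$ has finite length and $\sigma$ is finite-dimensional, $H_c^q(M,\pi_N\otimes\sigma)$ is a finite-dimensional smooth $A$-module, hence decomposes as the direct sum of its generalized eigenspaces $H_c^q(M,\pi_N\otimes\sigma)_\la$, on each of which every $a\in A$ acts as $a^\la$ times a unipotent operator; therefore $\tr\big(a\mid H_c^q(M,\pi_N\otimes\sigma)\big)=\sum_\la\dim H_c^q(M,\pi_N\otimes\sigma)_\la\,a^\la$ and, integrating, $\int_{A^-}\ph(a)\tr(a\mid H_c^q)\,da=\sum_\la\dim H_c^q(\dots)_\la\int_{A^-}\ph(a)a^\la\,da$. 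Substituting this into the spectral side of the Theorem and rearranging the iterated sums — all of which are finite, the $\pi$-sum by the finiteness assertion of the Theorem, the $q$-sum over $0,\dots,\dim M$, and the $\la$-sum because the cohomology is finite-dimensional and $\la\mapsto m_\la^{\sigma,\omega}$ has finite support — the coefficient of $\int_{A^-}\ph(a)a^\la\,da$ becomes precisely $m_\la^{\sigma,\omega}$. Thus the spectral side of the Theorem equals $\sum_\la m_\la^{\sigma,\omega}\int_{A^-}\ph(a)a^\la\,da=\big\langle\sum_\la m_\la^{\sigma,\omega}\la\,,\ \ph\big\rangle$. Since the two sides of the Corollary then agree on every test function $\ph$, the distributional identity follows, and reading the chain of equalities backwards shows conversely that the Corollary implies the Theorem, so the two are equivalent.

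The computations here are purely formal; the only genuine inputs are the ones already contained in the Theorem, namely the absolute convergence of the geometric sum, the finiteness of the $\pi$-sum, and the finiteness of the support of $\la\mapsto m_\la^{\sigma,\omega}$. These are what guarantee that both sides of the Corollary are honest distributions on $A^-$ and that every interchange of summation is legitimate. I therefore do not expect a real obstacle; the one point deserving care is simply the verification that the test functions used here (compactly supported locally constant functions on $A^-$) lie within the class of $\ph$ for which the Theorem was stated, and that the rearrangements of the iterated sums are covered by the Theorem's finiteness claims.
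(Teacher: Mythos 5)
Your proposal is correct and follows essentially the same route as the paper: the paper obtains the Corollary precisely by expanding $\tr\big(a\mid H_c^q(M,\pi_N\otimes\sigma)\big)$ into generalized $A$-eigenspaces, defining $m_\la^{\sigma,\omega}$, and reading the Theorem as a distributional identity on test functions $\ph$ supported in $A^-$. Your added checks (that compactly supported locally constant test functions satisfy the Theorem's hypotheses, and that the finiteness claims justify the rearrangements) only make explicit what the paper leaves implicit.
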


The proofs of the theorem and the corollary are in \cite{padlef}.

\section{The zeta function}\label{Sec2}
Let $q$ denote the residue field cardinality, so $q=|\CO/\varpi\CO|$ and let $r=r(P)=\dim A$.
There are uniquely determined positive integer multiples $\al_1,\dots,\al_{r}$ of the simple roots, such that the modular shift can be written as
$$
2\rho_P=\al_1+\dots+\al_{r}.
$$
For $a\in A^-$ we write
$$
l_j(a)=-\log_q(a^{\al_j}),\quad j=1,\dots,r.
$$
Then $l_j(a)$ is a non-negative integer.
For $\ga\in\CE_P(\Ga)$ we also write $l_j(\ga)=l_j(a_\ga)$.
For $u\in\C^n$ we write
$$
u^{L(a)}=u_1^{l_1(a)}\cdots u_r^{l_r(a)}
$$
and likewise $u^{L(\ga)}$.
Note that this several variable expression must not be mixed up with $u^{l(\ga)}$, where $u$ is a single variable and $l(\ga)$ is the length of a geodesic closed by $\ga$.
These two notions agree in the rank one case, but differ in higher rank.

For $u\in\C^r$ consider the series
$$
S_\Ga(u)=S_{\Ga,P,\om,\sigma}(u)=\sum_{[\ga]\in\CE_P(\Ga)}\la_\ga\,|\chi_r(\Ga_\ga)|\,\tr\om(\ga)\tr\sigma(m_\ga)u^{L(\ga)}.
$$
In the setting of Lie groups, the analogue of this function 
has been introduced in \cite{GAFA}, where it is used to establish a higher rank prime geodesic theorem.
Although not immediately clear by its definition, this function actually counts closed geodesics and therefore deserves to be called a "geometric zeta function," as is explained in the next section.

The following theorem gives a generalization of the Selberg zeta function  to higher rank groups. In order to collect all information the Lefschetz formula has to offer, it is necessary to encode it in a function of several variables, the number of variables given by the rank of the split torus.
This function then turns out to be a rational function and it indeed encodes the information of the Lefschetz formula in a neat and handy way.
It also contains all information encoded in geometric zeta functions, like the ones in \cite{KLW}, as we will make explicit in Section \ref{SecPGL}.

\begin{theorem}\label{thm2.1}
The series $S_\Ga(u)$ converges locally uniformly in the set
$$
\{ u\in\C^r:|u_j|< 1/q,\ j=1,\dots,r\}.
$$
It is a rational function in $u$. More precisely, there exists a finite subset $E\subset\bar A$, elements $a_1,\dots, a_r\in\bar A$
and natural numbers $k_1,\dots,k_r$ such that
$$
S_\Ga(u)=\sum_{\la\in\bar A^*}m_{\la+2\rho}^{\sigma,\om}
\frac1{1-a_1^\la u_1^{k_1}}\cdots\frac1{1-a_r^\la u_r^{k_r}}
\(\sum_{v\in E}v^{\la+2\rho}u^{L(v)}\).
$$
Both  sums are finite, so in particular, the coefficient  $m_{\la+2\rho}^{\sigma,\om}$ is zero for almost all $\la\in \bar A^*$.
\end{theorem}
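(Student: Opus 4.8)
The plan is to obtain $S_\Ga(u)$ by evaluating the Lefschetz formula of Section~\ref{sec1} on a family of test functions indexed by $u$, and then to compute the resulting spectral side by a lattice-point summation. Fix $u$ in the open polydisc $D=\{u\in\C^r:|u_j|<1\ \text{for all }j\}$ and put $\ph_u(a)=a^{-2\rho}u^{l(a)}$ for $a\in A^-$ and $\ph_u(a)=0$ otherwise. Both membership in $A^-$ and the integers $l_j(a)$ depend only on the image $\bar a\in\bar A=A/A_c$, and $2\rho\in\a_0^*$ is real, so $a^{2\rho}=q^{-2\rho(a)}$ is a positive real number depending only on $\bar a$; hence $\ph_u$ is uniformly smooth with support in $A^-$ and $\ph_u(a)\,|a^{2\rho}|=u^{l(a)}$ on $A^-$. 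There are exactly $r=\dim A$ simple roots $\al_1,\dots,\al_r$ and, being linearly independent, they form a $\Q$-basis of $X^*(A)\otimes\Q$; thus $L\colon\bar A\to\Z^r$, $\bar a\mapsto(l_1(\bar a),\dots,l_r(\bar a))$, is an injective homomorphism onto a sublattice $\La$ of finite index $d$, and $L$ carries $\bar A^-$ (the image of $A^-$ in $\bar A$) bijectively onto $\La\cap\Z_{\ge 1}^r$. Therefore
$$
\norm{\ph_u}=\int_A|\ph_u(a)|\,a^{2\rho}\,da=\sum_{\bar a\in\bar A^-}|u_1|^{l_1(\bar a)}\cdots|u_r|^{l_r(\bar a)}\ \le\ \prod_{j=1}^{r}\frac{|u_j|}{1-|u_j|}\ <\ \infty ,
$$
so $\ph_u$, and likewise $|\ph_u|$, is an admissible test function for every $u\in D$.

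Next I would pair the Lefschetz formula, in its distributional form (the Corollary), with $\ph_u$. On the geometric side each $a_\ga$ lies in $A^-$ and $\ph_u(a_\ga)=a_\ga^{-2\rho}u^{l(\ga)}$, so the factor $a_\ga^{2\rho}$ cancels and the geometric side becomes exactly $S_\Ga(u)$. Evaluating the formula on $|\ph_u|$ shows this series converges absolutely, and the continuity statement of the theorem gives $|S_\Ga(u)|\le C\,\norm{\ph_u}\le C\prod_j(1-|u_j|)^{-1}$, which yields local uniform convergence of $S_\Ga(u)$ on $D$. On the spectral side I obtain the finite sum $S_\Ga(u)=\sum_\la m_\la^{\sigma,\om}\int_{A^-}\ph_u(a)\,a^\la\,da=\sum_\la m_\la^{\sigma,\om}\int_{A^-}a^{\la-2\rho}u^{l(a)}\,da$. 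Writing $a=a_c\,s(\bar a)$ along a splitting $s$ of $1\to A_c\to A\to\bar A\to 1$ and using that $\ph_u$ factors over $\bar A$, the integral equals $\bigl(\int_{A_c}a_c^\la\,da_c\bigr)\sum_{\bar a\in\bar A^-}\bar a^{\,\la-2\rho}u^{l(\bar a)}$, which vanishes unless $\la$ is unramified; so only unramified $\la$ survive. Since $2\rho\in\bar A^*$, absorbing the twist by $2\rho$ (a relabeling of the finitely many surviving $\la$) reduces the claim to the assertion that
$$
Z_\la(u):=\sum_{\bar a\in\bar A^-}\bar a^{\,\la}\,u^{l(\bar a)}
$$
is a rational function of the stated shape for each such $\la$.

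Transporting this sum to $\La$ via $L$, the map $n\mapsto\bar a^{\,\la}$ (with $L(\bar a)=n$) is a homomorphism $\chi_\la\colon\La\to\C^\times$, so $Z_\la(u)=\sum_{n\in\La\cap\Z_{\ge 1}^r}\chi_\la(n)\,u^n$. Choose $k\in\N$ with $k\Z^r\subseteq\La$ (e.g.\ $k=d$) and write $\La=\bigsqcup_{w\in W}(w+k\Z^r)$ for a finite set $W\subset\La$ with $|W|=k^r/d$; intersecting with $\Z_{\ge 1}^r$ coordinate by coordinate turns each piece into a translated box $\prod_j\bigl(v_j(w)+k\Z_{\ge 0}\bigr)$, where $v_j(w)\in\{1,\dots,k\}$ is congruent to $w_j$ modulo $k$, and summing the geometric series in each coordinate gives
$$
Z_\la(u)=\sum_{w\in W}\chi_\la\bigl(v(w)\bigr)\,\prod_{j=1}^{r}u_j^{v_j(w)}\ \prod_{j=1}^{r}\frac{1}{1-\eta_j(\la)\,u_j^{\,k}},\qquad \eta_j(\la):=\chi_\la(k e_j).
$$
Taking $F=\{L^{-1}(v(w)):w\in W\}\subset\bar A$ (the $v(w)$ being pairwise incongruent modulo $k\Z^r$, so $|F|=|W|$), $n_j(v)=l_j(v)$ for $v\in F$, $k_1=\dots=k_r=k$, and $a_j=L^{-1}(k e_j)\in\bar A$ (so that $a_j^\la=\eta_j(\la)$ and $v^\la=\chi_\la(v(w))$), and substituting into $S_\Ga(u)=\sum_\la m_\la^{\sigma,\om}Z_\la(u)$, one recovers exactly the asserted rational expression; the outer sum is finite because $m_\la^{\sigma,\om}=0$ for all but finitely many $\la$ by the Lefschetz theorem.

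The step I expect to be the main obstacle is the combinatorics of the third paragraph: choosing the sublattice $\La$ and decomposing $\La\cap\Z_{\ge 1}^r$ into translated boxes so that a single collection of exponent data $(k_j,n_j(v))$ works simultaneously for all of the finitely many surviving characters $\la$, and keeping the twist by $2\rho$ consistent throughout. Everything else — the admissibility of $\ph_u$, the cancellation of $a_\ga^{2\rho}$ on the geometric side, and the convergence, finiteness and vanishing statements — follows directly from the Lefschetz formula as already established.
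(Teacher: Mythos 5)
Your proposal is correct and follows the paper's overall strategy exactly: plug the test family $\ph_u(a)=u^{l(a)}a^{-2\rho_P}$ (supported on $A^-$) into the Lefschetz formula, observe that the factor $a_\ga^{2\rho}$ cancels so the geometric side is $S_\Ga(u)$, and then compute the spectral side by parametrizing the lattice points of the cone and summing geometric series. Where you genuinely diverge is the combinatorial step: the paper proves an intrinsic cone-decomposition lemma, choosing for each simple root the minimal lattice vector $a_j\in\bar A$ with $\al_i(a_j)=0$ for $i\ne j$ and then adapted coset representatives of $\bar A/\sp{a_1,\dots,a_r}$, so that $A^-\cap\bar A$ is a disjoint union of translates of the free monoid $\N_0 a_1+\dots+\N_0 a_r$; you instead transport everything to $\Z^r$ via $L=(l_1,\dots,l_r)$, note the image is a finite-index sublattice $\La$, and slice $\La\cap\Z_{\ge 1}^r$ into translated boxes along cosets of $k\Z^r$ with $k\Z^r\subseteq\La$. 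The two devices are interchangeable here: yours is more elementary and makes all $k_j$ equal to one $k$ (at the cost of a possibly larger finite set $F$), while the paper's lemma produces the minimal edge generators and hence smaller data. You are also more careful than the paper on two points it glosses over, namely that ramified $\la$ drop out because the $A_c$-integral vanishes, and that the $-2\rho_P$ twist (the $q^2$ factors) must be absorbed by relabeling the finitely many surviving $\la$ to match the stated formula; spelling out the Weierstrass-type argument for locally uniform convergence is likewise a welcome addition rather than a deviation. I see no gap in your argument.
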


\begin{proof}
Note that $a\mapsto u^{l_j(a)}$ is the restriction of a character on $A$ to $A^-$ which we write as $a\mapsto a^{s_j}$.
Also, we write $a^s$ for $a^{s_1}\cdots a^{s_r}$.

For $u\in\C$ consider the function $\ph_u:A\to \C$ defined by
$$
\ph_u(a)=\begin{cases} \(uq\)^{l(a)}=u^{l(a)}a^{-2\rho}&a\in A^-,\\ 0& a\notin A^-.\end{cases}
$$
The function $\ph$ factors over $\bar A$, therefore is uniformly smooth.
It is easy to see that $\ph_u(a)|a^{-2\rho}|$ is integrable on $A$ if and only if $|u_j|<1/q$ for every $j=1,\dots,r$.
Assuming this, $\ph_u$ satisfies the Lefschetz formula, the geometric side of which equals
$
S_\Ga(u).
$
The spectral side is
$$
\sum_{\la\in A^*}m_\la^{\sigma,\om}\int_A\ph_u(a)a^\la\,da.
$$

\begin{definition}
Let $V$ denote a $\Q$-vector space of dimension $r\in\N$.
Let $V_\R=V\otimes\R$. A subset $C\subset V_\R$ is called a \e{sharp rational open cone} with $r$ sides if there exist $\al_1,\dots,\al_r\in\Hom(V,\Q)$ such that
$$
C=\{ v\in V_\R: \al_1(v)>0,\dots,\al_r(v)>0\}
$$
and its closure $\ol C$ does not contain a line.

\end{definition}

\begin{lemma}
Let $V$ denote a $\Q$-vector space of dimension $r\in\N$ and let $C$ be a sharp rational open cone in $V_\R$.
Let $\Sigma\subset V$ be a lattice, i.e., a finitely generated subgroup which spans $V$.
Then there exists a finite subset $E\subset\Sigma$ and elements $a_1,\dots,a_r\in\Sigma$ such that $C\cap\Sigma$ is the set of all $v\in V$ of the form
$$
v=v_0+\nu_1a_1+\dots+\nu_ra_r,
$$
where $v_0\in E$ and $\nu_1,\dots,\nu_r\in\N_0$.
The vector $v_0$ and the numbers $\nu_j\in\N_0$ are uniquely determined by $v$.
\end{lemma}

\begin{proof}
For $j=1,\dots,r$ let $a_j\in\Sigma$ be the unique element such that $\al_i(a_j)=0$ for $i\ne j$ and $\al_j(a_j)$ is strictly positive and minimal.
Then $a_1,\dots,a_r$ is a basis of $V$ inside $\Sigma$, hence it generates a sublattice $\Sigma'\subset \Sigma$.
Let $E$ be a set of representatives of $\Sigma/\Sigma'$ which may be chosen such that each $v_0\in E$ lies in $C$, but for every $j=1,\dots,r$ the vector $v_0-a_j$ lies outside $C$.
It is clear that every $v$ of the form given in the statement of the lemma is in $C\cap\Sigma$.

For the converse, let $v\in C\cap \Sigma$. Then there are uniquely determined $v_0\in E$, $\nu_1,\dots,\nu_r\in\Z$ such that $v=v_0+\nu_1a_1+\dots+\nu_ra_r$.
We have to show that $\nu_1,\dots,\nu_r\ge 0$.
Assume that $\nu_j<0$.
Then
$$
0<\al_j(v)=\al_j(v_0)+\nu_j\al_j(a_j)\le\al_j(v_0)-\al_j(a_j)=\al_j(v_0-a_j)
$$
and the latter is $\le 0$, as $v_0-a_j$ lies outside $C$. This is a contradiction!
\end{proof}

We apply this lemma to $V=\bar A\otimes\Q$, the lattice $\bar A$ and the cone $A^-$.
Writing the groups multiplicatively, we obtain
\begin{align*}
\int_A\ph_u(a)a^{\la+2\rho}\,da
&=\int_{A^-}a^{\la+s}\,da\\
&= \sum_{v\in E}\ \ \sum_{\nu_1,\dots,\nu_r=0}^\infty\(va_1^{\nu_1}\cdots a_r^{\nu_r}\)^{\la+s}\\
&= \sum_{v\in E} v^\la u^{L(v)}
\frac1{1-a_1^{\la+s_1}}\cdots\frac1{1-a_r^{\la+s_r}}.
\end{align*}
Writing $a_j^{\al_j}=q^{-k_j}$ we obtain the assertion of Theorem \ref{thm2.1}.
\end{proof}

\section{Geometric zeta functions}
In this section we explain in what sense the zeta function $S_\Ga(u)$ of the last section actually counts closed geodesics.
The way it is set up, it actually counts conjugacy classes in $\Ga$ and we will show how those are connected to geometrical data.

Let $G$ be a reductive linear group over a nonarchimedian local field and let $\Ga\subset G$ be a torsion-free uniform lattice.
Further let $\CB$ denote the Bruhat-Tits building of $G$.
This is a metric space which is a union of so called \e{apartments}, each of which is isometric with $\R^r$, the latter equipped with the euclidean metric.
A \e{geodesic line} in $\CB$ is a curve which locally minimizes distances. Each geodesic necessarily lies in one apartment and via any isometry with $\R^r$, is mapped to an affine line.
A geodesic curve $c:\R\to\CB$ can therefore be normalized to speed one, i.e., such that $d(c(a),c(b))=|a-b|$, where $d$ denotes the metric of $\CB$.

The group $\Ga$ acts on $\CB$ by isometries, so $\Ga\bs \CB$ becomes a metric space and geodesics in $\Ga\bs\CB$ lift to geodesics in $\CB$. 
A geodesic curve $c:\R\to\Ga\bs\CB$ is said to be normalized or have speed one, if this holds locally.
A \e{closed geodesic} in $\Ga\bs\CB$ is a normalized geodesic curve $c:\R\to\Ga\bs\CB$ which is periodic, i.e., there exists $l(c)>0$ such that $c(t+l(c))=c(t)$ holds for every $t\in\R$.
In this case, if $z\in\CB$ is a preimage of $c(t_0)$ for some  given $t_0\in\R$  and $\tilde c$ is the unique geodesic lift of $c$ to $\CB$ such that $\tilde c(t_0)=z$, then there exists a unique $\ga\in\Ga$ such that $\ga z=\tilde c(t_0+l(c))$. In this case we say that $\ga$ \e{closes} the geodesic $c$.

\begin{proposition}
\begin{enumerate}[\rm (a)]
\item Every $\ga\in \Ga\sm\{ 1\}$ closes a geodesic in $\CB$.
\item This sets up a bijection
$$
\psi: (\Ga\sm\{ 1\})/{conjugation}\to \{\text{closed geodesics}\}/\text{homotopy}
$$
with the property that
$$
\psi([\ga^n])=\psi([\ga])^n
$$
 for every $\ga\in\Ga\sm\{ 1\}$ and every $n\in\N$.
 \item If two closed geodesics $c,c'$ in $\Ga\bs\CB$ are homotopic, then there are preimages $\tilde c,\tilde c'$ in $\CB$ which are closed by the same $\ga\in\Ga$.
\item For a given $\ga\in\Ga$ let
$$
P_\ga=\{ x\in\CB: d(x,\ga x)\text{ is minimal}\}.
$$
Then $P_\ga$ is a convex subset of the building $\CB$ which is a union of parallel geodesic lines and $\ga$ acts by translation along these geodesics.
The set $P_\ga$ equals the set of all geodesics in $\CB$ which are closed by $\ga$. 
Consequently, the closed geodesics closed by a given $\ga$ all have the same length.
\end{enumerate}
\end{proposition}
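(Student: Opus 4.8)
The plan is to use that the Bruhat--Tits building $\CB$, with its natural metric, is a complete $\mathrm{CAT}(0)$ space on which the torsion-free uniform lattice $\Ga$ acts freely, properly discontinuously and cocompactly, and then to combine the comparison geometry of semisimple isometries with covering-space theory for the $K(\Ga,1)$ space $\Ga\bs\CB$. Two facts will be used throughout. First, every element of $\Ga$ acts semisimply on $\CB$ (standard in Bruhat--Tits theory), and since $\Ga$ acts freely, each $\ga\in\Ga\sm\{1\}$ is a \emph{hyperbolic} isometry: the displacement function $d_\ga(x)=d(x,\ga x)$ is convex (being the distance along geodesics between $x$ and $\ga x$ in a $\mathrm{CAT}(0)$ space) and attains a positive minimum $\ell(\ga)=\inf_x d_\ga(x)>0$. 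Second, since $\CB$ is contractible and $\Ga$ acts freely and properly discontinuously, $p\colon\CB\to\Ga\bs\CB$ is the universal covering with deck group $\Ga$, so free homotopy classes of loops in $\Ga\bs\CB$ correspond bijectively to conjugacy classes in $\Ga$.

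I would establish (d) first, since it carries the geometric content of all four parts. Convexity of $d_\ga$ makes $P_\ga=\{x:d_\ga(x)=\ell(\ga)\}=\mathrm{Min}(\ga)$ closed and convex. The structure theorem for minimal sets of hyperbolic isometries of $\mathrm{CAT}(0)$ spaces (a consequence of the Flat Strip Theorem; see Bridson--Haefliger, Chapter II.6) then provides a canonical isometric splitting $\mathrm{Min}(\ga)\cong Y\times\R$ under which $\ga$ acts by $(y,t)\mapsto(y,t+\ell(\ga))$; thus $P_\ga$ is a union of the pairwise parallel geodesic lines $\{y\}\times\R$, each translated the same distance $\ell(\ga)$, and $\ga$ acts by translation along them. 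For the identification of $P_\ga$ with the union of all geodesic lines in $\CB$ closed by $\ga$: if $c\colon\R\to\CB$ is an arc-length geodesic line with $\ga\cdot c(t)=c(t+\ell)$, $\ell>0$, then $d(c(0),\ga^n c(0))=n\ell$, while for $x\in\mathrm{Min}(\ga)$ one has $d(x,\ga^n x)=n\,\ell(\ga)$; the triangle inequality gives $n\ell\le 2\,d(x,c(0))+n\,\ell(\ga)$ for all $n$, so $\ell\le\ell(\ga)$, whereas $\ell=d_\ga(c(0))\ge\ell(\ga)$; hence $\ell=\ell(\ga)$ and $d_\ga\equiv\ell(\ga)$ along $c$, i.e.\ $c\subset P_\ga$. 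Conversely the lines $\{y\}\times\R$ are exactly such geodesics, and they all have the common length $\ell(\ga)$, which is the final assertion.

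Parts (a)--(c) then follow by covering-space theory. For (a): by (d), $\ga\ne1$ has an axis $c$, and $p\circ c$ is a closed geodesic in $\Ga\bs\CB$ closed by $\ga$. For (b): set $\psi([\ga])$ to be the homotopy class of $p\circ c$ for an axis $c$ of $\ga$. This is independent of the chosen axis, because any two axes of $\ga$ are parallel and hence bound a flat strip inside $P_\ga$ whose image under $p$ is a homotopy between the corresponding closed geodesics; it is independent of the conjugacy representative, because $\sigma\in\Ga$ carries axes of $\ga$ onto axes of $\sigma\ga\sigma^{-1}$ while $p\circ\sigma=p$. Bijectivity is the loop/conjugacy dictionary recorded above, made explicit by (d): a closed geodesic in $\Ga\bs\CB$ lifts to a geodesic line $\tilde c$ in $\CB$ and a unique $\delta\in\Ga$ with $\delta\cdot\tilde c(t)=\tilde c(t+\ell)$; by (d) $\tilde c$ is then an axis of $\delta$ with $\ell=\ell(\delta)$, and the assignment $[\delta]\mapsto$ this homotopy class is a two-sided inverse of $\psi$. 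The identity $\psi([\ga^n])=\psi([\ga])^n$ holds because an axis of $\ga$ is also an axis of $\ga^n$ (with translation distance $n\,\ell(\ga)$) and the associated loop is the $n$-fold iterate. For (c): given homotopic closed geodesics $c,c'$ in $\Ga\bs\CB$, lift them to geodesic lines $\tilde c,\tilde c'$ closed by $\ga,\ga'$; their free homotopy forces $\ga'=\sigma\ga\sigma^{-1}$ for some $\sigma\in\Ga$, so $\sigma^{-1}\tilde c'$ is a preimage of $c'$ closed by the same $\ga$ as $\tilde c$.

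The step I expect to be the main obstacle is the precise identification in (d) of the geodesics closed by $\ga$ with the minimal set $P_\ga$ --- equivalently, that every $\ga$-invariant geodesic line is translated by exactly $\ell(\ga)$ --- together with the bookkeeping in (b) that parallel axes project, via the flat-strip projection, to freely homotopic loops and that the loop/conjugacy correspondence is applied with compatible basepoint data. The remaining ingredients are either the cited $\mathrm{CAT}(0)$ structure theory or routine covering-space arguments.
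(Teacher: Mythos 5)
Your argument is correct, and for the geometric heart of the statement it takes a genuinely different route from the paper. The paper's treatment of (a) and (d) is building-specific and elementary: it constructs an axis through a point of $P_\ga$ by hand (the triangle-inequality/concatenation argument extending $\ol{p,\ga p}$ to a $\ga$-translated line), and it identifies $P_\ga$ with the set of $\ga$-closed geodesics by arguing that two such lines stay at bounded distance, hence lie in a common apartment and are parallel there, taking convex hulls inside that apartment to get convexity. You instead invoke the general $\mathrm{CAT}(0)$ machinery: convexity of the displacement function, semisimplicity of elements of a cocompact lattice, and the decomposition $\mathrm{Min}(\ga)\cong Y\times\R$ from the Flat Strip Theorem, plus a clean asymptotic estimate ($n\ell\le 2d(x,c(0))+n\ell(\ga)$, so $\ell=\ell(\ga)$) to show every $\ga$-closed geodesic lies in $\mathrm{Min}(\ga)$; this last step is arguably tidier than the paper's bounded-distance/apartment argument and works verbatim in any complete $\mathrm{CAT}(0)$ space, at the price of citing Bridson--Haefliger rather than staying self-contained in building geometry. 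For (b) and (c) both proofs are essentially the same covering-space dictionary (deck group $=\pi_1$, free homotopy classes of loops $=$ conjugacy classes), with the only cosmetic difference that the paper builds the inverse map first and you build $\psi$ directly, checking well-definedness via flat strips between parallel axes. One small point of bookkeeping: your justification of semisimplicity as ``standard in Bruhat--Tits theory'' is loose; the clean references are either Bridson's theorem that cellular isometries of a polyhedral complex with finitely many shapes of cells are semisimple (which is what the paper implicitly uses when it says the displacement function attains its minimum because $\ga$ preserves the simplicial structure) or the statement that elements of groups acting properly cocompactly on $\mathrm{CAT}(0)$ spaces are semisimple. This does not affect the validity of the proof.
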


\begin{proof}
(a)
Let $\Ga\in\Ga\sm\{1\}$.
As $\Ga$ is torsion-free, the element $\ga$ has no fixed point in $\CB$.
As $\ga$ preserves the simplicial structure on $\CB$, the function $p\mapsto d(p,\ga p)$ attains a minimal value $m>0$.
The set $P= P_\ga$ defined above is therefore well-defined and non-empty.
We first claim that $P$ is a union of $\ga$-stable geodesic lines on each of which $\ga$ acts by a translation. 
Let $p\in P$ and let $z$ be on the line segment between $p$ and $\ga p$.
Then we have
\begin{align*}
d(z,\ga z)&\le d(z,\ga p)+d(\ga p,\ga z)\\
&= d(z,\ga p)+d(p,z)\\
&=d(p,\ga p)=m.
\end{align*}
As $m$ is minimal, we have equality and the geodesic from $z$ to $\ga z$ is the composite of $\ol{z,\ga p}$ and $\ol{\ga p,\ga z}$, which means that the line segment $\ol{p,\ga z}$ is geodesic. 
We repeat this construction with $z$ in place of $p$ and in this way extend $\ol{p,\ga p}$ to a geodesic line which is preserved by $\ga$ and on which $\ga$ acts by translation.
This proves (a) and parts of (d)

(b)
As $\Ga$ is the fundamental group of $B_\Ga=\Ga\bs B$ we have a natural bijection
$$
\Ga/\text{conjugation}\to [S^1,B_\Ga],
$$
where the right hand side is the set of free homotopy classes of loops.
Also, there is a trivial injection
$$
\{\text{closed geodesics}\}/\text{homotopy}\hookrightarrow [S^1,B_\Ga].
$$
These maps compose to give the desired injective map 
$$
\psi:\{\text{closed geodesics}\}/\text{homotopy}\hookrightarrow \Ga/\text{conjugation}.
$$
By the first part, the image of this map is $\Ga\sm\{ 1\}/\text{conjugation}$.

(c) Let $\ga$ and $\ga'$ be elements of $\Ga$ closing some preimages $\tilde c$ and $\tilde c'$ of $c$ and $c'$.
By (b), the elements $\ga$ and $\ga'$ must be conjugate, which means that the preimages $\tilde c$ and $\tilde c'$ can be chosen in such a way that $\ga=\ga'$.

(d)
We already know that $P_\ga$ is a union of geodesic lines.
By construction, for $p\in P$, the convex hull $L_p$ of the set $\ga^\Z p$ is the unique geodesic line closed by $\ga$ and containing $p$.
Now let $q$ be another point of $P$, then the distance of any point on the geodesic line $L_q$ to any point on the line  $L_p$ is bounded, which can only happen if the two geodesics $L_p$ and $L_q$ lie in a common apartment and are parallel in that apartment.
The convex hull of these two lines is preserved by $\ga$ and as $\ga$ is a translation on both lines, it is a translation on this convex hull.
This proves the convexity of $P_\ga$.

Now finally, let $L$ be any geodesic which is closed by $\ga$ and let $z$ be a point of $L$.
Let $p$ be a point of $P_\ga$. Then again the lines $L$ and $L_p$ are parallel and thus lie in the same apartment, $\ga$ must act by the same translation and thus $L$ belongs to $P_\ga$.
\end{proof}

\begin{lemma}
Assume that $\Ga$ is torsion-free and let $\ga\in\Ga$.
Let $S\subset\CB$ be a $\Ga$-stable affine subset.
Then there exists an origin $0$ in $S$, a linear orthogonal transformation $T:S\to S$  and a point $b\in S\sm\{ 0\}$ with $Tb=b$ such that $\ga x=Tx+b$.
\end{lemma}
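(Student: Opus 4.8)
The plan is to reduce to the elementary structure theory of isometries of Euclidean space, after a suitable choice of origin. Being an affine subset of the $\mathrm{CAT}(0)$ building $\CB$, the set $S$ is a flat, so with the induced metric it is isometric to a finite dimensional Euclidean space; I fix an arbitrary point $O\in S$ and use it to identify $S$ with a real inner product space $V$. Since $S$ is $\Ga$-stable, $\ga$ maps $S$ onto itself and restricts there to an isometry of $V$, and so by the classical normal form for Euclidean isometries it can be written as $\ga x=T_0x+c$ with $T_0\in\mathrm O(V)$ and $c\in V$.

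Next I would recenter $S$ so as to absorb the ``wrong'' part of the translation vector $c$. As $T_0$ is orthogonal, one has the orthogonal direct sum decomposition $V=V^{T_0}\oplus(\Id-T_0)V$, where $V^{T_0}=\ker(\Id-T_0)$ is the fixed space of $T_0$: for $y\in V^{T_0}$ we have $T_0^{-1}y=y$, whence $\langle(\Id-T_0)x,y\rangle=\langle x,(\Id-T_0^{-1})y\rangle=0$ for all $x$, and the dimensions of the two summands add up to $\dim V$. Write $c=b+w$ with $b\in V^{T_0}$ and $w\in(\Id-T_0)V$, and choose $p\in V$ with $(\Id-T_0)p=w$. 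Taking $0:=p$ as the new origin of $S$, the isometry $\ga$ in the coordinates $y=x-p$ takes the form $y\mapsto T_0y+\bigl(c-(\Id-T_0)p\bigr)=T_0y+b$. Setting $T:=T_0$ I thus obtain $\ga x=Tx+b$, with $Tb=b$ by construction.

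It then remains to check $b\neq0$, and this is the point where $\ga\neq1$ must be used (the case $\ga=1$, for which the assertion is plainly impossible, is excluded anyway, since in the applications $\ga\in\Ga\sm\{1\}$, as in the preceding proposition). Indeed, if $b=0$, then in the new coordinates $\ga$ is the linear map $T$ and therefore fixes the new origin $p\in S\subset\CB$; but $\Ga$ is torsion-free and $\ga\neq1$, so $\ga$ can have no fixed point in the building, exactly as in part (a) of the preceding proposition. This contradiction gives $b\in S\sm\{0\}$, completing the proof.

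All the steps above are routine; the only place that asks for a little care is the first one, namely justifying that a $\Ga$-stable affine subset of the building really is a Euclidean space on which $\ga$ acts by an affine isometry, so that the normal form $x\mapsto T_0x+c$ is legitimate. After that, the argument is just the standard manipulation of splitting off the fixed-space component of the translation part of a Euclidean isometry, together with the torsion-freeness obstruction that forbids the translation part from vanishing.
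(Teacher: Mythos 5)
Your proof is correct and essentially identical to the paper's: both write $\ga x=T_0x+c$ for an arbitrary origin, split off the component of the translation vector along the fixed space of $T_0$, solve $(1-T_0)p=w$ on the orthocomplement to relocate the origin, and use torsion-freeness (no fixed points in $\CB$) to see that the remaining translation part $b$ is nonzero. Your explicit remark that $\ga=1$ must be excluded (as it implicitly is in the paper) is a sensible added precision, but the argument is the same.
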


\begin{proof}
As $\ga$ fixes the euclidean structure on $S$, it acts, after choosing an arbitrary origin, as $\ga x=Tx+b$ for some linear orthogonal $T$ and some $b\in S$.
Let $U$ be the eigenspace of the eigenvalue $1$ for $T$ and let $V$ be its orthocomplement.
We have the orthodecomposition $b=b_U+b_V$.
As $1-T:V\to V$ is surjective, there exists $v_0\in V$ with $(1-T)v_0=b_V$, or $\ga v_0-b=Tv_0=v_0-b_V$, which amounts to $\ga v_0=v_0+b_U$.
Since $\Ga$ is torsion-free, $\ga$ fixes no point in $\CB$ and so $b_U\ne 0$.
Relocating the zero to the point $v_0$ gives the claim.
\end{proof}

An element $g$ of $G$ is called \e{admissible}, if there exists a parabolic group $P=LN$ defined over $F$, such that $g$ lies in $A_L^\reg M_L$.
Here, for a given torus $A$, the set $A^\reg$ is the set of \e{regular elements}, i.e., the set of all $a\in A$ such that the centralizer $G_a$ of $a$ in $G$ equals the pointwise centralizer of $A$.
Then the group $A_L^\reg M_L$ has finite index in $L$ and as there are only finitely many conjugacy classes of parabolic subgroups, there exists $N\in \N$ such that $g^N$ is admissible for every semisimple, non-elliptic element $g$.
A subgroup $\Ga\subset G$ is called admissible, if every $\ga\in \Ga\sm\{1\}$ is admissible.

For simplicity of exposition, we will now assume that $G$ is simple, which implies that the Bruhat-Tits building $\CB$ is a simplicial complex.
Let $r\in\N$.
An \e{$r$-dimensional path} is a sequence $ \dots,S_{-1},S_0,S_1,\dots$ of $r$-dimensional simplices such that $S_j$ and $S_{j+1}$ have a common face of dimension $r-1$ for each $j\in\Z$.
We say that the path is \e{geodesic}, if there exists a geodesic line $L$ with $L\cup \mathring S_j\ne\emptyset$ for every $j\in\Z$.
Here $\mathring S$ denotes the interior of the simplex $S$.
If this is the case, then all $S_j$ lie in a common apartment $\CA$. We say that a given $\ga\in\Ga$ \e{closes} the path $(S_j)$ if $\ga S_j=S_{j+n}$ holds for all $j\in\Z$ and some $n\in\N$.
If this is the case, then $\ga$ stabilizes  the union of the $S_j$.
This union lies in a common apartment, so it carries an euclidean structure.
Therefore, after fixing an origin in $S$, the element $\ga$ acts as $\ga x=Tx+b$, where $T$ is linear orthogonal and $b\in S\sm\{ 0\}$.
Actually, $T$ fixes $b$ and thus can be considered an orthogonal transformation of the orthogonal space of $b$.

\section{$\PGL_3$}\label{SecPGL}
In this section we explain the connection between the current zeta function in several variables and the zeta functions occurring in a generalized Ihara formula in the paper \cite{KLW}.

The vertices of the building of $G=\PGL_3(F)$ are parametrized by homothety classes of $\CO$-lattices in $F^3$.
Let $v_F:F^\times\to\Z$ denote the valuation of the local field $F$.
The group $G$ acts transitively on the latter, but the index three subgroup $G'$ of all $g\in G$ with $v_F(\det(g))\equiv 0\mod(3)$ has three orbits, which are given by the representatives 
\begin{align*}
L_0&=\sp{e_1,e_2,e_3}\\
L_1&=\sp{e_1,e_2,\pi e_3}\\
L_2&=\sp{e_1,\pi e_2,\pi e_3}
\end{align*}
We say a vertex $v$ is of \e{type} $j\mod(3)$, if it is in the $G'$-orbit of $L_j$. 
We assume from now on, that $\Ga$ is contained in $G'$, so that $\Ga$ preserves types of vertices.

A  geodesic $c$ in $\CB$ or $\Ga\bs \CB$ is called \e{rational}, if it contains a point of the zero skeleton and is called \e{integral}, if it is contained in the 1-skeleton of $\CB$ or $\Ga\bs \CB$. Every integral geodesic is rational.
The vertices on an integral geodesic either have consecutive types $0,1,2$ or $2,1,0$. In the first case, the geodesic is called \e{positive} in the latter it is \e{negative}.
The inverse of a positive geodesic is negative and vice versa.
A geodesic parallel to an integral positive geodesic is also called positive.

An element $\ga\in\Ga\sm\{1\}$ is called positive, if it closes a positive geodesic, i.e., if for one and thus every point $p$ in $P_\ga$ the geodesic line through $\ga^\Z p$ is positive.

Let $C_{int}(\Ga)$ denote the set of all integral geodesics in $\Ga\bs\CB$.
Then every element of $C_{int}(\Ga)$ is actually closed, as we show below.

For $G=\PGL_3(F)$ there are three different classes of proper parabolics: $P_0$ is the group of all upper triangular matrices, 
$$
P_1=\left(\begin{array}{ccc} &  & * \\ &  &  \\0 & 0 & \end{array}\right)\quad\text{and}\quad P_2=\left(\begin{array}{ccc} &  & * \\0 &  &  \\0 &  & \end{array}\right).
$$
We write $P_j=L_jN_j$ for the Levi decomposition and we fix subgroups $M_jA_j\subset L_j$  as in Section \ref{sec1}.
We choose $A_0$ to be the group of all diagonal matrices, $A_1$ to be the subgroup of all matrices of the form $\diag(a,a,b)$ with $a,b\in F$, and $A_2$ to consist of all matrices of the form $\diag(a,b,b)$.

An element $\diag(a,b,c)$ of $A_0$ is called \e{strongly regular}, if the absolute values $|a|,|b|,|c|$ are all different.

The following Euler product can be viewed as one posible version of the \e{Ruelle zeta function} of compact Riemann surface. In the case of a higher-dimensional building, the zeta functions have to take the fact into account, that one can have geodesics, which in a given apartment point into different directions. 
This fact is taken care of by the fact that generally one deals with several variable zeta functions.

Restricting the variables, however, one gets single variable zeta functions, and sometimes these turn out to be representable by Euler products, in contrast to the several variable case.
The following zeta function only considers geodesics which point in one given direction, therefore the length suffices to characterize them and one gets a single variable zeta function.

\begin{definition}
Let 
$$
Z_{1,+}(u)=\prod_{c}\(1-u^{l(c)}\),
$$
where the product is extended over all closed integral positive primitive geodesics in $\Ga\bs\CB$.
Here a closed geodesic $c$ is called \e{primitive}, if it is not a power of a shorter one.
Note that $l(c)$ here denotes the length of the closed geodesic $c$.
\end{definition}

The reader may note, that this definition differs from Ihara's and others in that we consider the Euler factors with a positive exponent while other authors would prefer $1/Z_{1,+}(u)$ instead.
There is, however, good reason for this: firstly, as the next lemma shows, the function becomes a polynomial in this way, which is slightly easier to handle than the inverse of a polynomial. Next, Ihara's zeta function is the $p$-adic version of Selberg's zeta function and Selberg chose our current normalization, so we stuck with his notation.
There is a deep reason for the sign in Selberg's paper, which is explained in \cite{Geom1}: It emerges that the exponent prescribed by the trace formula is an Euler number, in Selberg's original case the Euler number of a point, but generally the Euler number of a locally symmetric space which can be negative, as happens in the example of $\SL_2\times\SL_2$.

\begin{lemma}
The infinite product $Z_{1,+}(u)$ converges absolutely to a polynomial in $u$, when $|u|$ is small.
\end{lemma}

As usual, we will also write $Z_{1,+}(u)$ for the polynomial which is the limit.

\begin{proof}
Let $N$ be the number of edges in the finite building $\Ga\bs\CB$.
Then there are no more than $N^m$ geodesic paths of length $m$.
Therefore,
$$
\sum_c |u|^{l(c)}\le\sum_{m=1}^\infty N^mu^m,
$$
which converges for $|u|<1/N$ and so the product $Z_{1,+}(u)$ also converges in that region.
Having settled convergence, let $V=\bigoplus_e\C e$ be the formal complex vector space generated by all edges $e$ of $\Ga\bs\CB$.
define a linear operator $T:V\to V$ by
$$
T(e)=\sum_{e'}e',
$$
where the sum ranges over all edges $e'$ connected to $e$ such that the path $ee'$ is positive.
We equip $V$ with the inner product $\sp{\ ,\ }$ defined by making the edges an orthonormal basis, i.e., by
$$
\sp{e,e'}=\begin{cases}1&e=e'\\ 0,& e\ne e'.\end{cases}
$$
For $n\in \N$ consider the $n$-th iteration $T^n$ of $T$. It's trace is
$$
\tr T^n=\sum_e\sp{T^ne,e}.
$$ 
If one imagines the action of $T$ as sending a potential from $e$ to neighboring edges $e'$ along positive directions, it becomes clear, that $\sp{T^ne,e}$ can only be non-zero, if $e$ lies on a positive path of length $n$.
Therefore one gets
$$
\tr T^n=\sum_{c:l(c)=n}l(c_0),
$$
where the sum runs over all positive paths $c$ of length $n$ and $c_0$ is the unique primitive positive path such that $c$ is a power of $c_0$.
We keep the notation that $c_0$ denotes a positive primitive closed geodesic, also $\sum_c$ will denote the sum over all positive closed geodesics $c$  and $\sum_{c_0}$ the sum over all positive primitive closed geodesics.
Note that if $c=c_0^m$, then $l(c)=ml(c_0)$, i.e., $m=l(c)/l(c_0)$.
We then get
\begin{align*}
Z_{1,+}(u)&=\prod_{c_0}\(1-u^{l(c_0)}\)=\exp\(\sum_{c_0}\log\(1-u^{l(c_0)}\)\)\\
&=\exp\(-\sum_{c_0}\sum_{m=1}^\infty\frac{u^{l(c_0)m}}m\)=\exp\(-\sum_{c}\frac{u^{l(c)}}{l(c)}l(c_0)\)\\
&=\exp\(-\sum_{n=1}^\infty\frac{u^{n}}{n}\sum_{c:l(c)=n}l(c_0)\)\\
&=\exp\(-\sum_{n=1}^\infty\frac{u^{n}}{n}\tr(T^n)\)=\exp\(\log(1-uT)\)\\
&=\det(1-uT).
\end{align*}
Here all infinite sums converge when $|u|$ is small enough.
\end{proof}

\begin{definition}
Let
$$
Z_{2,+}(u)=\prod_p\(1-u^{l(p)}\),
$$
where the product ranges over all positive primitive closed geodesic paths in $\Ga\bs\CB$ of dimension 2 and the length is the number of chambers such a path contains.

Similar to the above, it can be shown that $Z_{2,+}(u)$ converges to a polynomial when $|u|$ is small enough.
\end{definition}

We now show how the different zeta functions we have defined, are interrelated. 
The function $S_{\Ga, P_1}(u)$, as defined in Section \ref{Sec2}, uses group theoretical input from the groups $\Ga$ and $G$. The connection to geometric zeta functions defined by geometric data on the quotient building $\Ga\bs\CB$ is the fact, that each closed geodesic $c$ gets closed by some $\ga\in\Ga$.
In the special situation of the parabolic $P_1$, the rank of $A_1$ is one and therefore the function $S_{\ga,P_1}$ actually is a single variable function.
We now show how it is related to the geometric zeta functions defined via  rational geodesics and paths of higher dimensions $Z_{1,+}$ and $Z_{2,+}$.

\begin{theorem}\label{thm4.4}
After replacing the group $\Ga$ with a finite index subgroup, we have the identity of rational functions,
$$
\frac{Z_{2,+}(-u)}{Z_{1,+}(u^2)}=\exp\(- \int_0^u  S_{\Ga,P_1}(z)\,dz\).
$$
Or, otherwise stated, $S_{\Ga,P_1}(u)=\frac{F'}F(u)$, where $F(u)=\frac{Z_{1,+}(u^2)}{Z_{2,+}(u)}$.
\end{theorem}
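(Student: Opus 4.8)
The plan is to reduce everything to the Lefschetz formula for $P_1$ and then to a purely local, representation-theoretic identity.

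\emph{Reduction.} Since $Z_{1,+}$ and $Z_{2,+}$ are polynomials equal to $1$ at $u=0$ (the two Lemmas above) and $S_{\Ga,P_1}$ is rational (Theorem of \S2), it suffices to prove the differentiated form $S_{\Ga,P_1}(u)=\frac{F'}{F}(u)$, where $F(u)=Z_{1,+}(u^2)/Z_{2,+}(u)$; the exponential identity then follows by integrating from $0$ and using $F(0)=1$. One may write $Z_{2,+}(-u)$ in place of $Z_{2,+}(u)$ because every closed positive $2$-dimensional geodesic path has even length: crossing a codimension-one face interchanges the two cyclic orderings $0\to1\to2$ and $0\to2\to1$ of the vertex-types of a chamber, so a positive chamber path alternates between these two orientation classes and can close only after an even number of steps; hence $Z_{2,+}(-u)=Z_{2,+}(u)$. (Replacing $\Ga$ by a finite-index subgroup makes $\Ga$ admissible and type-preserving, as is needed below; here $\om$ and $\sigma$ are taken trivial.)

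\emph{Both sides as finite sums over $\hat G$.} First I would feed the test function $\ph_u(a)=u^{l(a)}a^{-2\rho_{P_1}}$, $a\in A_1^-$ (the one from the proof of the Theorem of \S2, with $r=1$), into the Lefschetz formula for $P_1$: its geometric side is $S_{\Ga,P_1}(u)$, and its spectral side is $\sum_{\pi\in\hat G}N_{\Ga,\om}(\pi)\,L_\pi(u)$ with
$$
L_\pi(u)=\sum_{q=0}^{\dim M_1}(-1)^q\int_{A_1^-}\ph_u(a)\,\tr\!\big(a\mid H_c^q(M_1,\pi_{N_1}\otimes\sigma)\big)\,da ,
$$
a rational function of $u$ depending only on $\pi^\infty$. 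Second, writing $Z_{1,+}(u)=\det(1-uT)$, $Z_{2,+}(u)=\det(1-uT')$ and decomposing the edge- and chamber-spaces of $\Ga\bs\CB$ along $L^2(\Ga\bs G,\om)=\bigoplus_\pi N_{\Ga,\om}(\pi)\pi$ (these spaces become $\bigoplus_\pi N_{\Ga,\om}(\pi)\pi^{J}$ for the appropriate parahoric $J$, with $T$, $T'$ acting through fixed Hecke operators), one gets
$$
\frac{F'}{F}(u)=\sum_{\pi\in\hat G}N_{\Ga,\om}(\pi)\Big(\tfrac{d}{du}\log\det\!\big(1-u^2T\mid\pi^{I_e}\big)-\tfrac{d}{du}\log\det\!\big(1-uT'\mid\pi^{I}\big)\Big).
$$
Both outer sums are finite (Lefschetz formula; admissibility), so the theorem reduces to the local identity, for each irreducible admissible smooth $\PGL_3(F)$-module $\pi$:
$$
L_\pi(u)=\tfrac{d}{du}\log\det\!\big(1-u^2T\mid\pi^{I_e}\big)-\tfrac{d}{du}\log\det\!\big(1-uT'\mid\pi^{I}\big) .
$$

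\emph{The local identity and the main obstacle.} Here $M_1$ is isogenous to $\SL_2$ and acts on its tree, so $H_c^\bullet(M_1,\cdot)$ is computed by the two-term complex (tree-edges)$\to$(tree-vertices) and $\sum_q(-1)^q\tr(a\mid H_c^q)$ collapses to $\tr(a\mid V^{K_0})+\tr(a\mid V^{K_1})-\tr(a\mid V^{I_{M_1}})$ with $V=\pi_{N_1}\otimes\sigma$ (these invariant spaces are finite-dimensional, $\pi_{N_1}$ having finite length). By Casselman's theorem these Jacquet-module invariants are, up to the modular twist already carried by $\ph_u$, the spaces $\pi^{J}$ for the parahorics $J\subset\PGL_3(F)$ generated by the Iwahori together with the image of $I_{M_1}$, resp.\ of the $K_i$ — exactly the chamber- and edge-stabilizers occurring in $T$ and $T'$ — with $a$ acting through the affine Hecke operator; carrying out the geometric series in $\int_{A_1^-}\ph_u(a)a^\la\,da$ as in \S2 then turns $L_\pi(u)$ into precisely the claimed combination of logarithmic derivatives of characteristic polynomials. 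I expect this last matching to be the real work: it is where the combinatorics of the $\tilde A_2$-building along a wall enters, and it is the origin of both the substitution $u\mapsto u^2$ in the edge factor (two edge-steps of the building to one step in the $M_1$-tree) and the sign in $Z_{2,+}(-u)$. In geometric terms it asserts that the family of $2$-dimensional positive geodesic paths shadowing a wall, corrected by twice the edge-geodesic along that wall, accounts exactly for the local contribution of the corresponding conjugacy class to $S_{\Ga,P_1}$. Everything else — rationality from \S2, the determinant formulas for $Z_{1,+}$ and $Z_{2,+}$, the $\hat G$-decomposition of $L^2(\Ga\bs G,\om)$, and Casselman's theorem — is standard or already at hand.
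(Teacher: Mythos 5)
There is a genuine gap, in fact two. First, your parity claim is false: it is not true that every closed positive $2$-dimensional geodesic path in $\Ga\bs\CB$ has even length, so you may not replace $Z_{2,+}(-u)$ by $Z_{2,+}(u)$. Your argument tacitly assumes that the element $\ga$ closing the chamber gallery preserves the two orientation classes of chambers, but a type-preserving element can act on the apartment containing the gallery as a glide reflection along the wall; for instance the affine Weyl group element $t_{\alpha_1^\vee+\alpha_2^\vee}\,s_{\alpha_1}$ (realized in $G'$ by a monomial matrix whose centralizer is a non-split torus) stabilizes the strip of chambers along an $\alpha_1$-wall and shifts the gallery by an odd number of chambers, producing a closed positive gallery path of odd length in the quotient. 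These are exactly the contributions in the paper's ``non-orientable'' case, where the sign is essential: the gallery of odd length $l$ contributes $1+u^{l}$ to $Z_{2,+}(-u)$, and the cancellation $\frac{1-u^{2l}}{1+u^{l}}=1-u^{l}$ against the edge factor is what makes the identity come out. Dropping the sign would make the asserted identity false whenever $\Ga$ contains such elements.

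Second, the heart of your plan — the local identity $L_\pi(u)=\frac{d}{du}\log\det\bigl(1-u^2T\mid\pi^{I_e}\bigr)-\frac{d}{du}\log\det\bigl(1-uT'\mid\pi^{I}\bigr)$ for every irreducible Iwahori-spherical $\pi$ — is precisely the content of the theorem and is not proved; you explicitly defer it as ``the real work.'' Identifying the Euler characteristic of $H_c^\bullet(M_1,\pi_{N_1})$ with its $A_1$-action, integrated against $\ph_u$, with characteristic polynomials of the parahoric and Iwahori Hecke operators is essentially the case-by-case computation of \cite{KLW}, which the paper invokes only later (in the Riemann Hypothesis section), not in the proof of this theorem. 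The paper's own proof is entirely different and geometric: it first passes to a finite-index subgroup so that every $\ga\ne 1$ is \emph{regular} (via Margulis arithmeticity and a degree argument in a division algebra of degree $3$) — note that this, and not admissibility or type-preservation, is the reason for the finite-index replacement in the statement — whence $\la_\ga|\chi_1(\Ga_\ga)|=l(\ga_0)$ and $\exp\bigl(\int_0^u S_{\Ga,P_1}\bigr)$ becomes an Euler product over primitive classes in $\CE_{P_1}(\Ga)$; it then matches this product with $Z_{1,+}(u^2)/Z_{2,+}(-u)$ by analyzing the min-sets $P_\ga$ (split versus non-split centralizer, orientable versus non-orientable galleries). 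Your spectral route could in principle work, but as written it both rests on the incorrect sign simplification and leaves its decisive step unestablished.
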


For the proof of the theorem, we will need the following lemma.

\begin{lemma}
After replacing the group $\Ga$ with a finite index subgroup, we can assume $\Ga$ to be regular in the sense that every $\ga\in\Ga\sm\{1\}$ lies in the regular set $G^\reg$.
\end{lemma}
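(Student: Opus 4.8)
The goal is to produce a finite-index subgroup $\Ga'\le\Ga$ with $\Ga'\sm\{1\}\subset G^\reg$. First I would pass to a neat finite-index subgroup, which is possible in characteristic zero by Borel's theorem (in the function-field case one takes instead a sufficiently deep principal congruence subgroup). Neatness forces the Zariski closure of each $\langle\ga\rangle$, $\ga\ne 1$, to be a torus, so every nontrivial element of $\Ga$ is semisimple; moreover no such $\ga$ is elliptic, since a torsion-free discrete element lying in a compact torus would fix a vertex of $\CB$ and hence be trivial. For $G=\PGL_3(F)$ a noncentral semisimple element has centralizer of dimension either $2$ (the regular case) or $4$ (the case that $\ga$ is conjugate to $\diag(a,a,b)$ with $a\ne b$, i.e.\ conjugate into $A_1$). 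Thus the lemma reduces to showing that, after this reduction, $\Ga$ contains no element of the second kind.

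The key observation is that this cannot be achieved by a mere further finite-index passage: if $\ga$ is conjugate to $\diag(a,a,b)$ then so is every power $\ga^n$ (with $a^n\ne b^n$, as $a/b$ is not a root of unity by neatness), and any finite-index subgroup of $\Ga$ — normal or not, via its normal core — contains $\ga^n$ for some $n\ge 1$. (Note also that such a $\ga$ is already admissible, lying in $A_1^\reg M_1$, so the admissibility already available does not help here.) So what actually has to be proved is that a neat torsion-free uniform lattice in $\PGL_3(F)$ has no element with a repeated eigenvalue ratio at all. I would deduce this from the arithmetic structure of $\Ga$: since $\PGL_3(F)$ has split rank $2$, Margulis arithmeticity applies, and cocompactness forces the ambient $k$-form to be anisotropic, so $\Ga$ is commensurable with the unit group of an order in $\PGL_1(D)$ for a central division algebra $D$ of degree $3$ over a global field $k$ with $k_v=F$. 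For nontrivial $\ga$, a lift to $D^\times$ generates a subfield $k(\ga)\subseteq D$ of degree dividing $3$ over $k$ and different from $k$, hence of degree exactly $3$ (a degree-$3$ division algebra has no proper intermediate subfield); so the reduced characteristic polynomial of $\ga$ is an irreducible, in particular separable, cubic, and $\ga$ has three distinct eigenvalues, i.e.\ $\ga\in G^\reg$. Intersecting the neat subgroup of the first step with a congruence subgroup realizing the commensurability gives the required $\Ga'$.

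The main obstacle is exactly this middle step. The building geometry alone does not exclude a $\ga$ conjugate to $\diag(a,a,b)$: its minimal set $P_\ga$ is then a product $\mathcal{T}\times\R$ of the tree of a $\GL_2$-factor with a line, on which $\ga$ acts by a translation that is trivial on $\mathcal{T}$, and this is perfectly compatible with discreteness and cocompactness — so the global input that no such element occurs in an arithmetic lattice attached to a cubic division algebra is genuinely needed. If one wishes to cover the outer (unitary) forms of $\PGL_3$ as well, one must supplement the argument by ruling out a $\ga$-isotypic $2$-dimensional subspace of the Hermitian space defined over $k$, using the anisotropy of the form; I would expect this to be the most delicate point of the write-up.
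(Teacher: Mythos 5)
Your argument is essentially the paper's: invoke Margulis arithmeticity to place $\Ga$ (after passing to a finite-index subgroup) inside the image of the unit group of an order in a degree-$3$ division algebra, and use that a prime-degree division algebra admits no noncommutative proper centralizers — every nontrivial element generates a cubic subfield — so each $\ga\ne 1$ is regular. The outer-form (Hermitian/second-kind) caveat you flag is a genuine point, but the paper's own proof does not treat it either (it simply asserts the inner, division-algebra form), so your main line coincides with the paper's argument.
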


\begin{proof}
By Margulis's arithmeticity result we know that $\Ga$ is arithmetic, so there exists a global field $\kappa$, of which $F$ is a local completion, and a division algebra $M$ over $\kappa$ of degree 3, which splits at $F$, such that $\Ga$ is commensurable with the image of $M(\La)^\times$ in $G(F)$, where $\La$ is some order in $\kappa$.
Replacing $\Ga$ by a finite index subgroup, we may assume that $\Ga$ lies in that image.
For a given $\ga\in\Ga\sm\{1\}$ fix a preimage $\tilde\ga\in M(\La)$.
The centralizer $M_{\tilde\ga}$ of $\tilde\ga$ in $M$ is a proper subalgebra, whose degree must divide the degree of $M$, which is a prime, therefore the degree of $M_{\tilde\ga}$ is one, so $M_{\tilde\ga}$ is a field, hence commutative and so is $G_\ga$ which is the image of $M_{\tilde\ga}(F)$.
Therefore $\ga$ is regular.
\end{proof}

\begin{proof}[Proof of the theorem]
By the lemma we can assume $\Ga$ to be regular.
In this case, each centralizer $G_\ga$ is a torus, so for $\ga\in\CE_{P_1}(\Ga)$ the group $\Ga_\ga$ will be isomorphic to $\Z$, so that $\chi_1(\Ga_\ga)=1$.
By the normalizations of Haar measures we see that $\la_\ga=l(\ga_0)$, where $\ga_0$ is the underlying primitive element.
Thus the Selberg zeta function equals
$$
S_{\Ga,P_1}(u)=\sum_{[\ga]\in\CE_{P_1}(\Ga)}l(\ga_0) u^{l(\ga)}.
$$
Note that in this particular situation, as the rank of $A_1$ equals one, we indeed have $u^{l(\ga)}=u^{L(\ga)}$.
So that for small enough $u$,
\begin{align*}
\exp\int_0^u S_{\Ga,P_1}(z)\,dz&= \exp\sum_{[\ga]}l(\ga_0)\frac1{l(\ga)}u^{l(\ga)}\\
&=\exp\sum_{[\ga_0]}\sum_{n=1}^\infty\frac{u^{l(\ga_0)n}}n\\
&= \exp\(-\sum_{\ga_0}\log(1-u^{l(\ga_0)})\)\\
&=\prod_{[\ga_0]\in\CE_{P_1,prim}(\Ga)}\(1-u^{l(\ga_0)}\)^{-1},
\end{align*}
where the product extends over all primitive elements in $\CE_{P_1}(\Ga)$.
Taking inverses, it remains to show
$$
\frac{Z_{1,+}(u^2)}{Z_{2,+}(-u)}=\prod_{[\ga_0]\in\CE_{P_1,prim}(\Ga)}\(1-u^{l(\ga_0)}\).
$$
To prove this, we will make use of the following phenomenon: If $p$ is a closed gallery path in $\Ga\bs\CB$, then the boundary of $p$ consists of two or one closed integral geodesics, depending on whether $p$ is orientable or not.
In the orientable case, the length of $p$ will be twice the length of either of the geodesics, so the contribution of $p$ to the product $Z_{2,+}(-u)$ will equal the contribution of either of the two geodesics in $Z_{1,+}(u^2)$.
The minus sign will not play a role as the length of the gallery path is even.
In the non-orientable case, one gets only one closed geodesic and this has the same length as $p$, which is an odd number and one gets the contribution $\frac{1-u^{2l(\ga)}}{1+u^{l(\ga)}}=1-u^{l(\ga)}$.
This kind of reduction is used in the sequel.

Start with a positive closed primitive integral geodesic $c$, choose a preimage $\tilde c$ in $\CB$ and let $\ga\in\Ga$ be an element closing $\tilde c$.
We first deal with the case when $\ga$ is not a primitive element of $\Ga$, so $\ga=\ga_0^n$ for some $n\in\N$, $n\ge 2$.
Then the geodesic $\tilde c$ lies in $P_\ga$, but not in $P_{\ga_0}$.
So, $\tilde c$ must be parallel to a geodesic $\tilde c_0$ in $P_{\ga_0}$.
The convex hull of $\tilde c$ and $\tilde c_0$ is a strip consisting of finitely many galleries and the one adjacent to $\tilde c$ is closed by $\ga$ and no lower power of $\ga_0$.
Therefore, the factor of $\tilde c$ cancels with the factor of this particular gallery.
It follows that we only have to consider geodesics closed by primitive elements of $\Ga$.

{\it First case.} Assume that $G_\ga$ is a split torus. 
Then $\ga$ induces a translation on the apartment $S$ attached to $G_\ga$.
This apartment therefore lies in $P_\ga$.
The set $P_\ga$ is a union of parallel geodesic lines, so every $x_0\in P_\ga$ lies in a unique geodesic line $L=c(\R)$ fully contained in $P_\ga$, where the geodesic curve $c:\R\to P_\ga$ with $c(0)=x_0$ is uniquely determined up to orientation, i.e., up to replacing $c(t)$ by $c(-t)$.
We assume that orientations have been chosen in a compatible way so that we obtain an action of $\R$ on the set $P_\ga$ given by $(t,x_0)\mapsto c(t)$. We call this the \e{geodesic action} of $\R$.
We consider the quotient of $P_\ga$ by the geodesic action and  see that $P_\ga/\R$ is a tree which contains a line $L$.
We claim that the structure of this tree is as such that $P_\ga/\R\sm L$ is a union of disjoint finite trees.
This is a consequence of the fact that $\Ga_\ga\bs P_\ga$ is compact.
So, modulo geodesic gallery paths, one can reduce each of the finite trees to a point and so reduce $P_\ga$ to one apartment $S$.
The image of $S$ in $\Ga\bs \CB$ is a union of closed geodesics or of closed gallery paths and both occur in the same number, so that they cancel in the quotient 
$\frac{Z_{1,+}(u^2)}{Z_{2,+}(-u)}$.

{\it Second case.}
If $G_\ga$ is a non-split torus,
then $P_\ga$ will not contain an apartment.
Then $P_\ga/\R$ is compact 
We have two possible situations.
The first is that $P_{\ga}$ contains an integral geodesic, so we can reduce to that one and get one remaining contribution of the form $(1-u^{2l(\ga)})$.
If $P_\ga$ does not contain an integral geodesic, this implies that $P_\ga$ is a single line going through the interior of a gallery path, which is not closed by $\ga$, but by $\ga^2$.
In the quotient, this is exactly the non-orientable case and the argument given above proves Theorem \ref{thm4.4}.
\end{proof}

{\bf Remark:}
In \cite{KL}, the authors also study $\frac{Z_{1,+}(u^2)}{Z_{2,+}(-u)}$ by taking the logarithm derivative and show that it can be expressed as a group zeta function, which indeed equals the right hand side of the above theorem. Their computations are much more complicated but include that case that $\Gamma$ is not regular.

\section{Riemann Hypothesis}
The complex $\CB_\Gamma = \Gamma \backslash \CB$ is called \emph{Ramanujan} if all irreducible unramified infinite dimensional subrepresentations of $L^2(\Gamma \backslash G)$ are tempered. See \cite{Li} and \cite{LSV} for details. When $G=\PGL_2(F)$, $\CB_\Gamma$ is a finite regular graph.
It was first pointed out by \cite{Sunada} that the non-trivial poles of Ihara zeta function $Z(\CB_\Gamma,u)$  of $\CB_\Gamma$ have absolute values equal to $q^{-1/2}$ if and only if $\CB_\Gamma$ is a Ramanujan graph. The first condition is called the Riemann hypothesis of $Z(\CB_\Gamma,u)$ since if we replace $u$ by $q^{-s}$, the condition becomes that all non-trivial poles of $Z(\CB_\Gamma,q^{-s})$ lie on Re$(s)=\frac{1}{2}$. We remark that $Z(\CB_\Gamma,u)$ satisfies the Riemann hypothesis if and only if $\CB_\Gamma$ is a Ramanujan graph.

We shall give an analogue of the above statement for $G=\PGL_3(F)$.
Recall that $Z_{1,+}(u)$ and  $Z_{2,+}(u)$ are polynomials so that $ Z_{1,+}(u)=\det(I-L_E u)$ for some parahoric Hecke operator $L_E$; $ Z_{2,+}(u)=\det(I-L_B u)$ for some Iwahori Hecke operator $L_B$ \cite{KLW}. Given a smooth unramified representation $V$ of $G$, consider 
$$ Q(V,u)=\frac{\det(I+ L_B u)}{\det(I-L_E u^2)}$$
where the determinant is taken over the spaces of parahoric and Iwahori fixed vectors of $V$ respectively. Then we have 
$$ \frac{Z_{2,+}( -u)}{Z_{1,+}(u^2)} = \prod_{V} Q(V,u)^{m_V}$$
where $V$ runs through all irreducible unitary Iwahori-spherical subrepresentations of $L^2(\Gamma \backslash G)$ and $m_V$ is its multiplicity.
From Table 1 and Table 2 in \cite{KLW}, we have \\
\begin{itemize}
\item[(a)] If $V$ is a principal series representation, then $Q(V,u)=1$.

\item[(b)] If $V$ is the trivial representation twisted by a cubic unramified character $\chi$ of $F$, then $Q(V,u)=\frac{1}{1- q \chi(\pi) u}$ and $m_V=1$. 

\item[(c)] If $V$ is the Steinberg representation twisted by a cubic unramified character $\chi$ of $F$, then $Q(V,u)={1- \chi(\pi) u}$ and $m_V= \chi(X_\Gamma)-1$.

\item[(d)] If $V$ the irreducible subrepresentation of $\Ind(\chi|~|^{-1/2}, \chi|~|^{1/2}, \chi^{-2})$, where $\chi$ is
an unramified unitary character of $F^{\times}$.
Then $Q(V,u)={1}/(1- q^{1/2} \chi(\pi) u)$. Moreover, $V$ is not tempered.

\item[(e)] The irreducible subrepresentation of
$\Ind(\chi|~|^{1/2}, \chi|~|^{-1/2}, \chi^{-2})$, where $\chi$ is
an unramified unitary character of $F^{\times}$. 
Then we have $Q(V,u)={1- q^{1/2} \chi(\pi) u}$. 
\end{itemize}

We summarize the above in the following theorem
\begin{theorem}
With the above notation we have
$$
\frac{Z_{2,+}( -u)}{Z_{1,+}(u^2)} = \frac{(1-u^3)^{\chi-1}P_1(u)}{(1-q^3u^3)P_2(u)}
$$ 
where $P_1(u)=\prod_\alpha (1-\alpha u)$ and $P_2(u)=\prod_\beta (1-\beta u)$ with $|\alpha|=|\beta|=q^{1/2}$.
\end{theorem}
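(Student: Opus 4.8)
The plan is to evaluate the Euler product recorded above,
$$
\frac{Z_{2,+}(-u)}{Z_{1,+}(u^2)}=\prod_V Q(V,u)^{m_V},
$$
by sorting the irreducible unitary Iwahori-spherical subrepresentations $V$ of $L^2(\Ga\bs G)$ into the five families (a)--(e), which, by the classification underlying Tables 1 and 2 of \cite{KLW}, exhaust all irreducible unitary Iwahori-spherical representations of $\PGL_3(F)$. The principal series of family (a) drop out at once, since $Q(V,u)=1$ there, so only families (b)--(e) contribute.

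First I would treat families (b) and (c) together. The cubic unramified characters $\chi$ of $F^\times$ are precisely those with $\chi(\pi)\in\mu_3$, so there are exactly three of them; for each, $g\mapsto\chi(\det g)$ is a well-defined character of $\PGL_3(F)$ (because $\chi^3=1$) which is \emph{trivial} on $G'$, hence on $\Ga\subset G'$, since $\chi$ is unramified and $v_F(\det g)\equiv 0\bmod 3$ for $g\in G'$. Thus all three twists $\mathbf{1}\otimes\chi$ occur in $L^2(\Ga\bs G)$, each with $m_V=1$, and all three twists $\mathrm{St}\otimes\chi$ occur, each with $m_V=\chi(X_\Ga)-1$, where $\chi=\chi(X_\Ga)$ is the Euler characteristic appearing in the statement (not to be confused with the characters $\chi$ above). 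Using the cyclotomic identity $\prod_{\zeta\in\mu_3}(1-t\zeta u)=1-t^3u^3$, valid for any scalar $t$, the contribution of family (b) is $\prod_{\zeta\in\mu_3}(1-q\zeta u)^{-1}=(1-q^3u^3)^{-1}$, and that of family (c) is $\prod_{\zeta\in\mu_3}(1-\zeta u)^{\chi-1}=(1-u^3)^{\chi-1}$; these account for the two cubic factors on the right-hand side.

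Next I would handle families (d) and (e). Since $\Ga\bs G$ is compact, $L^2(\Ga\bs G)^I$ is the finite-dimensional space of functions on the finite set of chambers of $\Ga\bs\CB$, so only finitely many irreducible $V$ with a nonzero Iwahori-fixed vector occur, each with finite multiplicity; in particular only finitely many $V$ of types (d) and (e) appear. Collecting, with multiplicities, the scalars $\alpha=q^{1/2}\chi(\pi)$ arising from the $V$ of family (e) and the scalars $\beta=q^{1/2}\chi(\pi)$ arising from family (d), and setting $P_1(u)=\prod_\alpha(1-\alpha u)$ and $P_2(u)=\prod_\beta(1-\beta u)$, the contribution of family (e) is $P_1(u)$ and that of family (d) is $P_2(u)^{-1}$; moreover $|\alpha|=|\beta|=q^{1/2}$, because $\chi$ is unramified unitary and hence $|\chi(\pi)|=1$. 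Multiplying the contributions of families (b)--(e) now gives
$$
\frac{Z_{2,+}(-u)}{Z_{1,+}(u^2)}=\frac{(1-u^3)^{\chi-1}P_1(u)}{(1-q^3u^3)P_2(u)},
$$
as asserted.

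The step most in need of care is not a single computation but the reliance on the classification: one must be sure that (a)--(e) is genuinely an exhaustive list of the irreducible unitary Iwahori-spherical representations of $\PGL_3(F)$, so that no family of $V$ is quietly omitted from the Euler product, and that the displayed values of $Q(V,u)$ and the multiplicity patterns hold for every cubic twist. Both facts are exactly the content of Tables 1 and 2 of \cite{KLW}; granting them, what remains is the elementary cyclotomic algebra above together with the finite-dimensionality of $L^2(\Ga\bs G)^I$.
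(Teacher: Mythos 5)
Your proposal is correct and follows essentially the same route as the paper: the paper's proof is precisely the Euler product $\prod_V Q(V,u)^{m_V}$ combined with the case list (a)--(e) drawn from Tables 1 and 2 of \cite{KLW}, with the principal series dropping out, the three cubic twists of the trivial and Steinberg representations giving $(1-q^3u^3)^{-1}$ and $(1-u^3)^{\chi-1}$, and the finitely many non-tempered/complementary constituents (d), (e) giving $P_2(u)^{-1}$ and $P_1(u)$. Your added details (the cyclotomic identity over $\mu_3$, triviality of $\chi\circ\det$ on $\Ga\subset G'$ giving multiplicity one, and finite-dimensionality of $L^2(\Ga\bs G)^I$) merely make explicit what the paper leaves implicit.
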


\begin{corollary}
When $\CB_\Gamma$ is a Ramanujan complex, then 
$$ \frac{Z_{2,+}( -u)}{Z_{1,+}(u^2)}= (1-u^3)^{\chi} \frac{P_1(u)}{(1-u^3)(1-q^3u^3)},$$ where $P_1(u)=\prod_\alpha (1-\alpha u)$ with $|\alpha|=q^{1/2}$ of degree $N_1-3N_0+6$. Here $N_i$ is the number of $i$-simplex in $\CB_\Gamma$. In this case, we say the complex zeta functions of $\CB_\Gamma$ satisfy the Riemann hypothesis. 
\end{corollary}

\begin{bibdiv} \begin{biblist}

\bib{borel-lingroups}{book}{
   author={Borel, Armand},
   title={Linear algebraic groups},
   series={Graduate Texts in Mathematics},
   volume={126},
   edition={2},
   publisher={Springer-Verlag},
   place={New York},
   date={1991},
   pages={xii+288},
   isbn={0-387-97370-2},
   %review={\MR{1102012 (92d:20001)}},
   doi={10.1007/978-1-4612-0941-6},
}

\bib{borel}{book}{
   author={Borel, Armand},
   title={Introduction aux groupes arithm\'etiques},
   language={French},
   series={Publications de l'Institut de Math\'ematique de l'Universit\'e de
   Strasbourg, XV. Actualit\'es Scientifiques et Industrielles, No. 1341},
   publisher={Hermann},
   place={Paris},
   date={1969},
   pages={125},
   %review={\MR{0244260 (39 \#5577)}},
}

\bib{Borel-Wallach}{book}{
   author={Borel, A.},
   author={Wallach, N.},
   title={Continuous cohomology, discrete subgroups, and representations of
   reductive groups},
   series={Mathematical Surveys and Monographs},
   volume={67},
   edition={2},
   publisher={American Mathematical Society},
   place={Providence, RI},
   date={2000},
   pages={xviii+260},
   isbn={0-8218-0851-6},
   %review={\MR{1721403 (2000j:22015)}},
}

\bib{Cartier}{article}{
   author={Cartier, P.},
   title={Representations of $p$-adic groups: a survey},
   conference={
      title={Automorphic forms, representations and $L$-functions},
      address={Proc. Sympos. Pure Math., Oregon State Univ., Corvallis,
      Ore.},
      date={1977},
   },
   book={
      series={Proc. Sympos. Pure Math., XXXIII},
      publisher={Amer. Math. Soc., Providence, R.I.},
   },
   date={1979},
   pages={111--155},
   %review={\MR{546593 (81e:22029)}},
}

\bib{Geom1}{article}{
   author={Deitmar, Anton},
   title={Geometric zeta functions of locally symmetric spaces},
   journal={Amer. J. Math.},
   volume={122},
   date={2000},
   number={5},
   pages={887--926},
   issn={0002-9327},
   %review={\MR{1781924 (2002c:11112)}},
}

\bib{GAFA}{article}{
   author={Deitmar, A.},
   title={A prime geodesic theorem for higher rank spaces},
   journal={Geom. Funct. Anal.},
   volume={14},
   date={2004},
   number={6},
   pages={1238--1266},
   issn={1016-443X},
   %review={\MR{2135166 (2007g:22008)}},
   doi={10.1007/s00039-004-0490-7},
}

\bib{padlef}{article}{
   author={Deitmar, Anton},
   title={Lefschetz formulae for $p$-adic groups},
   journal={Chin. Ann. Math. Ser. B},
   volume={28},
   date={2007},
   number={4},
   pages={463--474},
   issn={0252-9599},
   %review={\MR{2348458 (2008g:11192)}},
   doi={10.1007/s11401-005-0234-5},
}

\bib{Ihara}{article}{
   author={Ihara, Yasutaka},
   title={On discrete subgroups of the two by two projective linear group
   over ${\germ p}$-adic fields},
   journal={J. Math. Soc. Japan},
   volume={18},
   date={1966},
   pages={219--235},
   issn={0025-5645},
   %review={\MR{0223463 (36 \#6511)}},
}

\bib{KL}{article}{
   author={Kang, Ming-Hsuan},
   author={Li, Wen-Ching Winnie},
   title={The zeta functions of complexes from ${\rm PGL}(3)$},
   journal={Advances in Mathematics},
   volume={256},
   pages={46--103},
   date={2014}
}

\bib{KLW}{article}{
   author={Kang, Ming-Hsuan},
   author={Li, Wen-Ching Winnie},
   author={Wang, Chian-Jen},
   title={The zeta functions of complexes from ${\rm PGL}(3)$: a
   representation-theoretic approach},
   journal={Israel J. Math.},
   volume={177},
   date={2010},
   pages={335--348},
   issn={0021-2172},
   %review={\MR{2684424 (2012a:11138)}},
   doi={10.1007/s11856-010-0049-2},
}

\bib{Kottwitz}{article}{
   author={Kottwitz, Robert E.},
   title={Tamagawa numbers},
   journal={Ann. of Math. (2)},
   volume={127},
   date={1988},
   number={3},
   pages={629--646},
   issn={0003-486X},
   %review={\MR{942522 (90e:11075)}},
   doi={10.2307/2007007},
}

\bib{Li}{article}{
   author={Li, Wen-Ching Winnie},
   title={Ramanujan hypergraphs},
   journal={Geom. Funct. Anal.},
   volume={14},
   date={2004},
   number={2},
   pages={380--399},
   issn={1016-443X},
   %review={\MR{2060199 (2005i:11172)}},
   doi={10.1007/s00039-004-0461-z},
}

\bib{LSV}{article}{
   author={Lubotzky, Alexander},
   author={Samuels, Beth},
   author={Vishne, Uzi},
   title={Ramanujan complexes of type $A_d$},
   note={Probability in mathematics},
   journal={Israel J. Math.},
   volume={149},
   date={2005},
   pages={267--299},
   issn={0021-2172},
   %review={\MR{2191217 (2006i:11134)}},
   doi={10.1007/BF02772543},
}

\bib{Sunada}{article}{
   author={Sunada, Toshikazu},
   title={$L$-functions in geometry and some applications},
   conference={
      title={Curvature and topology of Riemannian manifolds},
      address={Katata},
      date={1985},
   },
   book={
      series={Lecture Notes in Math.},
      volume={1201},
      publisher={Springer},
      place={Berlin},
   },
   date={1986},
   pages={266--284},
   %review={\MR{859591 (88g:58152)}},
   doi={10.1007/BFb0075662},
}

\bib{Tits}{article}{
   author={Tits, J.},
   title={Reductive groups over local fields},
   conference={
      title={Automorphic forms, representations and $L$-functions},
      address={Proc. Sympos. Pure Math., Oregon State Univ., Corvallis,
      Ore.},
      date={1977},
   },
   book={
      series={Proc. Sympos. Pure Math., XXXIII},
      publisher={Amer. Math. Soc., Providence, R.I.},
   },
   date={1979},
   pages={29--69},
   %review={\MR{546588 (80h:20064)}},
}

\bib{Wolf}{article}{
   author={Wolf, Joseph A.},
   title={Discrete groups, symmetric spaces, and global holonomy},
   journal={Amer. J. Math.},
   volume={84},
   date={1962},
   pages={527--542},
   issn={0002-9327},
   %review={\MR{0148013 (26 \#5523)}},
}

\end{biblist} \end{bibdiv}

Anton Deitmar \\
{\small Mathematisches Institut,
Auf der Morgenstelle 10,
72076 T\"ubingen, Germany,
\tt deitmar@uni-tuebingen.de}

Ming-Hsuan Kang \\
{\small Department of Applied Mathematics, National Chiao-Tung University,
Hsinchu, Taiwan,
\tt mhkang@math.nctu.edu.tw}

\end{document}